\newtheorem{thm}{Theorem}[section]
\newtheorem{prop}[thm]{Proposition}
\newtheorem{defprop}[thm]{Definition--Proposition}
\newtheorem{lem}[thm]{Lemma}
\numberwithin{equation}{section}
\theoremstyle{definition}
\newtheorem{definition}[thm]{Definition}
\newtheorem{remark}[thm]{Remark}
\newtheorem{ex}[thm]{Example}
\DeclareFontFamily{U}{mathc}{}
\DeclareFontShape{U}{mathc}{m}{it}%
{<->s*[1.03] mathc10}{}
\DeclareMathAlphabet{\mathcal}{U}{mathc}{m}{it}
\newcommand{\kend}{\mathcal{E\mkern-3mu nd}}
\newcommand{\ch}{{\rm ch}}
\newcommand{\td}{{\rm td}}
\newcommand{\Br}{{\rm Br}}
\newcommand{\SBr}{{\rm SBr}}
\newcommand{\SBro}{{\rm SBr}^{\rm o}}
\newcommand{\Coh}{{\rm Coh}}
\newcommand{\NS}{{\rm NS}}
\newcommand{\Pic}{{\rm Pic}}
\newcommand{\rk}{{\rm rk}}
\newcommand{\Hom}{{\rm Hom}}
\newcommand{\cal}{\mathcal}
\newcommand{\ka}{{\cal A}}
\newcommand{\kc}{{\cal C}}
\newcommand{\kl}{{\cal L}}
\newcommand{\km}{{\cal M}}
\newcommand{\ko}{{\cal O}}
\newcommand{\kp}{{\cal P}}
\newcommand{\ks}{{\cal S}}
\newcommand{\GG}{\mathbb{G}}
\newcommand{\ZZ}{\mathbb{Z}}
\newcommand{\QQ}{\mathbb{Q}}
\newcommand{\CC}{\mathbb{C}}
\newcommand{\PP}{\mathbb{P}}
\DeclareSymbolFont{cyrletters}{OT2}{wncyr}{m}{n}
\DeclareMathSymbol{\Sha}{\mathalpha}{cyrletters}{"58}
\renewcommand{\to}{\xymatrix@1@=15pt{\ar[r]&}}
\newcommand{\lto}{\xymatrix@1@=15pt{&\ar[l]}}
\renewcommand{\rightarrow}{\xymatrix@1@=15pt{\ar[r]&}}
\renewcommand{\mapsto}{\xymatrix@1@=15pt{\ar@{|->}[r]&}}
\newcommand{\mapslto}{\xymatrix@1@=15pt{&\ar@{|->}[l]&}}
\renewcommand{\twoheadrightarrow}{\xymatrix@1@=18pt{\ar@{->>}[r]&}}
\renewcommand{\hookrightarrow}{\xymatrix@1@=15pt{\ar@{^(->}[r]&}}
\newcommand{\hook}{\xymatrix@1@=15pt{\ar@{^(->}[r]&}}
\newcommand{\congpf}{\xymatrix@1@=15pt{\ar[r]^-\sim&}}
\renewcommand{\cong}{\simeq}
\newcommand{\New}[1]{#1}
\newcommand{\Old}[1]{}
\def\blfootnote{\xdef\@thefnmark{}\@footnotetext}
\begin{document}

\title{The special Brauer group and twisted Picard varieties}

\author[D.\ Huybrechts, D.\ Mattei]{Daniel Huybrechts and Dominique Mattei}

\address{Mathematisches Institut \& Hausdorff Center for Mathematics,
Universit{\"a}t Bonn, Endenicher Allee 60, 53115 Bonn, Germany}
\email{huybrech@math.uni-bonn.de}

\address{Institute of Algebraic Geometry, Leibniz University Hannover, Welfengarten 1, 30167 Hannover, Germany}
\email{mattei@math.uni-hannover.de}

\begin{abstract} \noindent We generalise the notion of the Tate--{\v{S}}afarevi{\v{c}} group $\Sha(S/\PP^1)$ of an elliptic K3 surface $S\to \PP^1$ with a section
to $\Sha(S,h)$ of a K3 surface  $S$ endowed with a linear system $|h|$. 
The construction, which uses Grothendieck's special Brauer group, provides
an efficient way to deal with moduli spaces of twisted sheaves supported on curves in $|h|$.

 \vspace{-2mm}
\end{abstract}

\maketitle
\blfootnote{The authors are supported by the ERC Synergy Grant HyperK (ID 854361).}

\section{Introduction} 
Let $S_0\to\PP^1$ be an elliptic K3 surface with a section. 
Another elliptic K3 surface $S\to\PP^1$ (without a section) is called a twist of $S_0$
if its Jacobian fibration is $S_0\to \PP^1$, i.e.\ \New{$S_0\cong\overline\Pic^0(S/\PP^1)$}  relative over $\PP^1$. The twists of a fixed $S_0\to\PP^1$ are parametrised by the Tate--{\v{S}}afarevi{\v{c}} group $\Sha(S_0/\PP^1)$. According to a result originally due to \New{Grothendieck \cite{BrauerIII} and} Artin--Tate \cite{Tate},
this group is naturally isomorphic to the Brauer group, so  $\Sha(S_0/\PP^1)\cong\Br(S_0)$.

Changing perspective, every twist $S\to\PP^1$ of $S_0\to \PP^1$ can be viewed as a moduli space of rank one sheaves on the fibres of $S_0\to \PP^1$ twisted by some class $\alpha\in \Br(S_0)$. We will rephrase this by writing $S\to\PP^1$ 
as \New{ $\overline\Pic_\alpha^0(S_0/\PP^1)$} for some Brauer class $\alpha\in \Br(S_0)$.\smallskip

\subsection{} In this article we generalise the classical picture and consider twisted Picard varieties for arbitrary  generically smooth, complete linear systems $\kc\to|h|$
 of curves  contained in a K3 surface $S$. More specifically, this will lead us to consider twisted relative Jacobians
$\Pic_\alpha^d(\kc/|h|_{{\rm sm}})$ of the family $\kc\to|h|_{{\rm sm}}$  of all smooth curves
in $|h|$, generalising the classical relative Picard varieties
$\Pic^d(\kc/|h|_{{\rm sm}})$. However, in general the twists are not indexed by elements
in the Brauer group  $\Br(S)$ but by elements in a certain extension of it. 

\begin{thm}\label{thm:1}
Consider a complete, generically smooth, linear system $\kc\to |h|$ on a K3 surface $S$. Then there exists a natural \New{finite cyclic extension $\Sha(S,h)$ (the Tate--{\v{S}}afarevi{\v{c}} group):}
$$\xymatrix{0\ar[r]&\ZZ/m\ZZ\ar[r]&\Sha(S,h)\ar[r]&\Br(S)\ar[r]&0}$$
\New{that parametrises (possibly non effectively) all $\Pic^0(\kc/|h|_{{\rm sm}})$-torsors 
 isomorphic to a twisted relative Jacobian $\Pic^0_\alpha(\kc/|h|_{{\rm sm}})$.\smallskip}

Furthermore, the product in $\Sha(S,h)$ corresponds to the products
of twisted Picard varieties viewed as torsors for $\Pic^0(\kc/|h|_{\rm sm})\to |h|_{\rm sm}$, see
Remarks \ref{rem:twistint} \& \ref{rem:groupstructure}.
\end{thm}

The integer  $m$ in the theorem is the divisibility of $h$ as an element in the lattice $\NS(S)$ and we will later see that also all $\Pic_\alpha^d(\kc/|h|_{{\rm sm}})$ for non-zero $d$ are taken care of by $\Sha(S,h)$, see Proposition \ref{prop:getall}.\smallskip

This new  Tate--{\v{S}}afarevi{\v{c}} group $\Sha(S,h)$ extends the notion of the classical Tate--{\v{S}}afarevi{\v{c}}
group of an elliptic K3 surface $S_0\to \PP^1$ with a section to an
arbitrary elliptic K3 surface $S\to \PP^1$. It turns out that 
there exists a natural isomorphism $\Sha(S,f)\cong\Sha(S_0/\PP^1)$
between the new Tate--{\v{S}}afarevi{\v{c}} group $\Sha(S,f)$ 
and the classical one $\Sha(S_0/\PP^1)$ of its Jacobian fibration
{$S_0=\overline\Pic^0(S/\PP^1)$}, see Section \ref{sec:classTS}.
\smallskip

\subsection{} 
Working with twisted sheaves poses a number of technical problems. Firstly,
in order to talk about twisted sheaves it is not enough to just fix a Brauer class
$\alpha\in \Br(S)$. A certain geometric realisation is needed. This could
be an Azumaya algebra, a gerbe, a Brauer--Severi variety, or a \v{C}ech cocycle.
Secondly, to deal with moduli spaces, certain numerical invariants, e.g.\ Chern classes or Mukai vectors, need to be fixed. For example, one cannot define,
without introducing a certain ambiguity, the degree of an
$\alpha$-twisted sheaf on a fibre of $S\to\PP^1$. 
{ As was shown by Lieblich \cite[Ch.\ 5]{LiebPhD}, the ambiguity can be lifted by passing to $\mu_n$-gerbes, which is roughly the same as lifting a Brauer
class to the special Brauer group. Working with the special Brauer group allows
us to deal with all Brauer classes at the same time.}
\smallskip

To address these issues we make use of Grothendieck's special Brauer group
$\SBr(S)$ which is a certain extension of $\Br(S)$ by $\NS(S)\otimes\QQ/\ZZ$,
cf.\ \cite{BrauerII}. However, it will turn out that it is more convenient to work with a smaller
subgroup $\SBro(S)\subset\SBr(S)$, the restricted special Brauer group, 
which is a natural extension of $\Br(S)$ by the discriminant group of $\NS(S)$:
$$\xymatrix{0\ar[r]&\NS(S)^\ast/\NS(S)\ar[r]&\SBro(S)\ar[r]&\Br(S)\ar[r]&0.}$$
The various Tate--{\v{S}}afarevi{\v{c}} groups $\Sha(S,h)$ of curve classes on $S$  are then constructed as certain quotients $\SBro(S)\twoheadrightarrow\Sha(S,h)$.\smallskip

\subsection{} To get an idea of the role of the special Brauer group, 
let us look at the case of a smooth projective integral curve $C$ over an arbitrary field \New{$k$}.
In this case, there exists a short exact sequence of the form
$$\xymatrix{0\ar[r]&\QQ/\ZZ\ar[r]&\SBr(C)\ar[r]&\Br(C)\ar[r]&0.}$$ In addition
to the classical Picard varieties $\Pic^d(C)$, $d\in \ZZ$, which are torsors for
$\Pic^0(C)$, one can define varieties
$\Pic_\alpha^d(C)$ for any class $\alpha\in \SBr(C)$
and any rational number $d\in \QQ$.  If not empty, they are again torsors for $\Pic^0(C)$. Furthermore, \New{if $\alpha\in \SBr(C)$ is contained in the subgroup
$\QQ/\ZZ\subset\SBr(C)$, then $\Pic_\alpha^d(C)$ is a trivial torsor. The
converse holds if there exists a rational point $x\in C(k)$ such that the restriction $\alpha_x\in\Br(k(x)\cong k)$ is trivial. See the discussion in Section \ref{sec:modulicurve} for further details.}

\subsection{} Our discussion also sheds a new light on work of Markman \cite{Mark}. {Building upon his work}, we will prove the following result
 in Section \ref{sec:Mark}.

\begin{thm}\label{thm:main2}
Let $X\to \PP^n$ be Lagrangian fibration of a 
projective hyperk\"ahler manifold \New{of Picard rank two} of ${\rm K3}^{[n]}$-type. Then there exists
a K3 surface $S$, a complete linear system $\kc\to |h|$ on $S$, and a class
$\alpha\in\Sha(S,h)$ such that $X$ is birational to $\Pic^0_\alpha(\kc/|h|_{{\rm sm}})$
relative over $|h|\cong\PP^n$.
\end{thm}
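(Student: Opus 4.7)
The plan is to combine Markman's description of Lagrangian fibrations on hyperk\"ahler manifolds of K3$^{[n]}$-type with the Tate--{\v{S}}afarevi{\v{c}} formalism developed in the previous sections. First, I would invoke Markman's theorem, on which the statement is explicitly built: the non-special assumption on $X$ guarantees the existence of a projective K3 surface $S$, a primitive Mukai vector of the form $v=(0,h,s)$ with $h\in \NS(S)$, and a Brauer class $\alpha_0\in\Br(S)$ such that, over the base $\PP^n\cong|h|$, the fibration $X\to\PP^n$ is birational to the moduli space $M_v(S,\alpha_0)$ of $\alpha_0$-twisted Gieseker-stable sheaves with Mukai vector $v$.

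Since $v_0=0$, every such sheaf is pure of dimension one, supported on a curve $C\in|h|$, and its restriction to the generic curve $\kc_\eta$ is an $\alpha_0|_{\kc_\eta}$-twisted line bundle of some fixed rational degree $d$ determined by $s$. This is precisely the local situation analysed in Section \ref{sec:modulicurve} for an individual curve, and then globalised in Section \ref{sec:mainproof} to the linear system $\kc\to|h|$. Applying that theory, $M_v(S,\alpha_0)$ is identified birationally over $|h|$ with a twisted relative Jacobian $\Pic^d_\alpha(\kc/|h|)$ for some class $\alpha\in\Sha(S,h)$ lifting $\alpha_0$. Using Proposition \ref{prop:getall}, which ensures that every non-empty $\Pic^d_\alpha(\kc/|h|)$ is of the form $\Pic^0_{\alpha'}(\kc/|h|)$ for some $\alpha'\in\Sha(S,h)$, one reduces to the degree-zero case and concludes.

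The main obstacle is the lifting of Markman's Brauer class $\alpha_0\in\Br(S)$ to the group $\Sha(S,h)$, which by Theorem \ref{thm:1} is a non-trivial extension of $\Br(S)$ by the finite cyclic group $\ZZ/m\cdot\ZZ$, with $m$ the divisibility of $h$ in $\NS(S)$. The requisite lift is not encoded in $\alpha_0$ alone: it depends on the geometric data carried by the Mukai vector $v$ and, following Lieblich's viewpoint mentioned in the introduction, on the refinement of $\alpha_0$ to a $\mu_n$-gerbe structure on the moduli space. The heart of the argument consists in verifying that this refinement yields a canonical element of $\Sha(S,h)$, independent of the auxiliary choices made in the birational identification $X\sim M_v(S,\alpha_0)$, so that the birational model $\Pic^0_{\alpha'}(\kc/|h|)$ produced via Theorem \ref{thm:1} is intrinsically attached to the Lagrangian fibration $X\to\PP^n$.
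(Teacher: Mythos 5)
Your overall strategy (Markman plus the $\Sha(S,h)$ formalism, then Proposition \ref{prop:getall} to normalise the degree to zero) has the right shape, but there is a genuine gap at the very first step: you attribute to Markman a statement he does not prove. His theorem does not present $X$ as a moduli space of $\alpha_0$-twisted sheaves for a Brauer class $\alpha_0\in\Br(S)$; it presents $X$ as an \emph{analytic} twist $M_s$ of an \emph{untwisted} Mukai system $M=\Pic^d(\kc/|h|)$, where the twisting parameter $s$ lives in the analytic group $\Sha^0$, a quotient of $\SBro(S)^{\rm an}=H^2(S,\ko_S)/T(S)$, and is a priori non-torsion. The first --- and indispensable --- step of the argument is therefore to show that projectivity of $X$ forces $s$ to be torsion, i.e.\ $s\in\SBro(S)\subset\SBro(S)^{\rm an}$; this is where the projectivity hypothesis actually enters, and the paper quotes Abasheva--Rogov for it. By assuming an algebraic Brauer class from the outset you have silently absorbed this step, and with it the only place where the hypothesis on $X$ does real work.

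The second gap is that your ``heart of the argument'' paragraph states the remaining difficulty --- producing a well-defined class in $\Sha(S,h)$ from the data --- but does not resolve it. In the paper this is done concretely: $\Pic^d(\kc/|h|)\cong\Pic^0_{\bar d}(\kc/|h|)$ with $\bar d\in\ZZ/m\cdot\ZZ\subset\Sha(S,h)$ (Example \ref{ex:genexa}), and the twist $M_s$ for torsion $s=\alpha$ is identified, via the torsor product formula of Remark \ref{rem:groupstructure} together with the discussion in \cite[\S 7.2]{Mark}, with $\Pic^0_{\bar d\cdot\alpha}(\kc/|h|)$. So the class you are looking for is simply $\bar d\cdot\alpha$: the degree shift $\bar d$ and the special Brauer class $\alpha$ are separate, explicit ingredients, and no gerbe-theoretic refinement of a Brauer class is required. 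As written, your proposal correctly identifies the two difficulties but proves neither.
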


 \New{The assumption on the Picard number can be weakened to a genericity assumption, cf.\ \S\! \ref{sec:Mark}}. If $X$ is not assumed projective, the K3 surface $S$ may be non-projective and for the class $\alpha$ one may have to use an analytic version of the special Brauer group.

\vskip0.5cm
\noindent
{\bf Conventions:}
In most of this article, we deliberately restrict to complex projective K3 surfaces, but there are interesting questions to explore for more general types of varieties as well as in more arithmetic settings. In our discussion it will sometimes be convenient
to deal with the scheme-theoretic generic curve in a linear system on a complex
projective K3 surface, in which case we work with curves over function fields
$\CC(t_1,\ldots,t_n)$.

For simplicity we will always assume that the complete linear system $\kc\to|h|$ parametrises at least one smooth curve or, equivalently, that the generic fibre is smooth. The cases of interest to us are  ample complete linear systems
and elliptic pencils.

For an Azumaya algebra $\ka$ we call $d(\ka)\coloneqq\sqrt{\rk(\ka)}$ the degree  of $\ka$.
\medskip

\noindent
{\bf Acknowledgements:} {We wish to thank Nick Addington for inspiration 
and Reinder Meinsma for helpful discussions. We are also grateful to Asher Auel and Evgeny Shinder for constructive criticism of the first version of the paper, to Alexei Skorobogatov for help with the literature, and to the referee for a thorough reading  and  the many pertinent comments.} The first author gratefully acknowledges the hospitality of the ITS-ETH Zurich during his stay in the spring of 2023.

\section{Grothendieck's special Brauer group}
The special Brauer group introduced by Grothendieck in \cite[Rem.\ 3.9]{BrauerII} is an extension of the classical Brauer group. In this section, we recall its definition and its cohomological description and explain how to use it to define Chern characters of twisted sheaves.
In the next section, we will then explain in what sense it is better suited to
study moduli spaces of twisted sheaves.

\subsection{Hodge theory} Assume $X$ is a smooth complex projective variety. We
are mainly interested in the case of K3 surfaces, but ultimately we will want to apply 
everything also to projective hyperk\"ahler manifolds.

By $\NS(X)$ we denote the N\'eron--Severi group of $X$
and  by $T(X)$ its transcendental lattice, i.e.\ the smallest saturated
sub-Hodge structure of $H^2(X,\ZZ)$ with $H^{2,0}(X)\subset T(X)\otimes \CC$.
The inclusion $\NS(X)\oplus T(X)\subset H^2(X,\ZZ)$ is rarely an equality but always of finite index. For simplicity we will ignore any torsion in $H^2(X,\ZZ)$.
Next, we define $T'(X)$ by the short exact sequence
\begin{equation}\label{eqn:NST'}
\xymatrix{0\ar[r]&\NS(X)\ar[r]& H^2(X,\ZZ)\ar[r]&T'(X)\ar[r]&0.}
\end{equation}
Then the composition $T(X)\subset H^2(X,\ZZ)\twoheadrightarrow T'(X)$ realizes
the transcendental lattice as a subgroup $T(X)\subset T'(X)$ of finite index. The quotient is a finite group,  which we will denote $$A(X)\coloneqq T'(X)/T(X).$$

For a surface $S$, the unimodular intersection form provides a natural identification $T'(S)\cong T(S)^\ast$ and $A(S)$ is nothing but the discriminant group of the transcendental lattice  or, equivalently, of the N\'eron--Severi lattice: $$A(S)  \cong T(S)^\ast/T(S)\cong \NS(S)^\ast/\NS(S).$$

Tensoring the exact sequence $0\to T(X)\to T'(X)\to A(X)\to 0$ with $\QQ/\ZZ$ 
induces an exact sequence, cf.\ \cite[Sec.\ 5.3]{CTS} or \cite[(9) \& Cor.\ 1.5]{GvSk}:
\begin{equation}\label{eqn:ATTses}
\xymatrix{0\ar[r]&A(X)\ar[r]&T(X)\otimes\QQ/\ZZ\ar[r]&T'(X)\otimes\QQ/\ZZ\ar[r]&0,}
\end{equation}
where we identified ${\rm Tor}_1(A(X),\QQ/\ZZ)$ with $A(X)$. The inclusion
is  explicitly described by first lifting an element in $A(X)$ to a class in $T'(X)$,
which is unique up to elements in $T(X)$, and then viewing it as an element in $T(X)\otimes \QQ=T'(X)\otimes\QQ$.

\begin{remark} Under our assumptions, a result of Gabber and de Jong, cf.\ \cite[Ch.\ 4]{CTS}, shows that the Brauer group equals the cohomological Brauer group, so $\Br(X)\cong H^2_{\text{\'et}}(X,\GG_m)$. The latter group can be identified
with the torsion subgroup $H^2(X,\ko_X^\ast)_{\rm tor}\subset H^2(X,\ko_X^\ast)$,
using the analytic topology. Furthermore, the exponential sequence provides an exact sequence
$$\xymatrix@C=18pt{0\ar[r]&\NS(X)\ar[r]&H^2(X,\ZZ)\ar[r]&H^2(X,\ko_X)\ar[r]&H^2(X,\ko^\ast_X)\ar[r]&H^3(X,\ZZ)\ar[r]&}$$
and in particular both groups $T(X)\subset T'(X)$ can be viewed as subgroups of $H^2(X,\ko_X)$. 

If $H^3(X,\ZZ)=0$, then 
\begin{equation}\label{eqn:BrT}
\Br(X)\cong H^2(X,\ko_X^\ast)_{\rm tor}\cong \left(H^2(X,\ko_X)/T'(X)\right)_{\rm tor}\cong T'(X)\otimes\QQ/\ZZ,
\end{equation} 
which for a surface $S$, using $T(S)^\ast\cong T'(S)$, is often written as $\Br(S)\cong\Hom (T(S),\QQ/\ZZ)$.

Without any assumption on $H^3(X,\ZZ)$, the isomorphism (\ref{eqn:BrT}) only describes the divisible part of $\Br(X)$.
\end{remark}

\subsection{Special Brauer group} In the very last remark of \cite{BrauerII}, Grothendieck introduced the  {special Brauer group} $\SBr(X)$ of a scheme $X$. It seems that this group has not been used much explicitly over the last fifty years, mainly because of its purely topological character. \New{However, it reflects a point of view that is central for Lieblich's treatment of twisted sheaves, see e.g.\ \cite{LiebPhD}, and is exactly what is needed for our purposes.}  The definition is similar to the 
definition of the Brauer group $\Br(X)$ as the group of Morita equivalence classes of Azumaya algebras.

\begin{definition}
The \emph{special Brauer group} $\SBr(X)$ of a scheme $X$ is the group of equivalence classes of Azumaya algebras $\ka$ with  respect to the equivalence relation generated by $\ka\sim \ka\otimes\kend(F)$ with the locally free sheaf $F$ required to
have trivial determinant $\det(F)\cong\ko_X$.
\end{definition}

\begin{remark}
(i) In order to ensure that with this definition the tensor product
$\ka\otimes\ka'$ defines a group structure on $\SBr(X)$ one needs
to use the fact that any Azumaya algebra $\ka$ has trivial determinant.\smallskip

(ii) Variants of the above definition exist. For example, instead of requiring $\det(F)$ to be trivial one can ask it to be only torsion or algebraically (or cohomologically)  trivial. It turns out that all these conditions eventually lead to the same group.
\end{remark}

By construction, $\SBr(X)$ naturally surjects onto $\Br(X)$ and the kernel has been determined in \cite[Rem.\ 3.9]{BrauerII}: There exists a short exact sequence
\begin{equation}\label{eqn:SBrBr}
\xymatrix{0\ar[r]&\Pic(X)\otimes\QQ/\ZZ\ar[r]&\SBr(X)\ar[r]&\Br(X)\ar[r]&0.}
\end{equation}

As explained in \cite[Rem.\ 3.9]{BrauerII}, the cohomological version of (\ref{eqn:SBrBr}) is obtained as the limit
of the exact sequences
$$\xymatrix{0\ar[r]&\Pic(X)/\ell^n\cdot\Pic(X)\ar[r]&H^2(X,\mu_{\ell^n})\ar@{->>}[r]&H^2_{\text{\'et}}(X,\GG_m)[\ell^n]\ar[r]&0}$$ induced by the Kummer sequence. In particular, whenever $\Br(X)\congpf H^2_\text{\'et}(X,\GG_m)$,
then $$\SBr(X)[\ell^\infty]\cong H^2(X,\QQ_\ell/\ZZ_\ell(1))={\lim_{\to}} H^2(X,\mu_{\ell^n}).$$ Since we assume $X$ to be a smooth complex projective variety, we can use singular cohomology to describe the situation. One finds $$\Pic(X)\otimes\QQ/\ZZ\cong \NS(X)\otimes\QQ/\ZZ\cong(\QQ/\ZZ)^{\oplus\rho(X)}$$ and
\begin{equation}\label{eqn:SBrcoh}
\SBr(X)\cong H^2(X,\QQ/\ZZ)\cong (\QQ/\ZZ)^{\oplus b_2(X)}\oplus H^3(X,\ZZ)_{\text{tors}}.
\end{equation}
If $H^3(X,\ZZ)_{\text{tors}}=0$, the sequence (\ref{eqn:SBrBr}) is obtained from (\ref{eqn:NST'}) by tensoring with $\QQ/\ZZ$:
\begin{equation}\label{eqn:sesSBrBr}
\xymatrix@R=6pt@C=12pt{0\ar[r]&\Pic(X)\otimes\QQ/\ZZ\ar[r]&\SBr(X)\ar[r]&\Br(X)\ar[r]&0\\&\cong \NS(X)\otimes\QQ/\ZZ&\cong H^2(X,\ZZ)\otimes\QQ/\ZZ&~~~\cong T'(X)\otimes \QQ/\ZZ.&}
\end{equation}

{\begin{remark}\label{rem:gerbes}
The Brauer group $\Br(X)\cong H^2(X,\GG_m)$ can also be viewed as the set of isomorphism classes of $\GG_m$-gerbes on $X$. Similarly, $H^2(X,\mu_n)$ is the
set of isomorphism classes of $\mu_n$-gerbes on $X$. From this perspective,
$\SBr(X)$ is the set of all $\mu_n$-gerbes on $X$, 
where a $\mu_n$-gerbe is identified with the naturally associated
$\mu_{nk}$-gerbe under $\mu_n\subset\mu_{nk}$.
\end{remark}}

\begin{remark}
(i) The injection in (\ref{eqn:sesSBrBr}) is geometrically realised by sending $(1/r)\cdot L$ with $L\in \Pic(X)$
to the Azumaya algebra given by $\kend(F)$, where $F$ is any vector bundle of rank $r$ and determinant $L$. Use
$\kend(F_1)\otimes\kend(F_1^\ast\otimes F_2)\cong \kend(F_2)\otimes\kend(F_1\otimes F_1^\ast)$ to see that this is independent of the choice of $F$. \smallskip

(ii) {This description fits with the interpretation of $\SBr(X)$ as parametrising $\mu_n$-gerbes. The $\mu_n$-gerbe associated with $\kend(F)$, i.e.\
its image under $H^1(X,{\rm PGl}(r))\to H^2(X,\mu_n)$, is the image
of $L\in \Pic(X)=H^1(X,\GG_m)$ under the boundary map
$\delta_r\colon H^1(X,\GG_m)\to H^2(X,\mu_r)$ induced by the Kummer
sequence.}\smallskip

\New{(iii)} As for the classical Brauer group, one checks that 
\New{if $\alpha\in \SBr(X)$ is represented by an Azumaya algebra $\ka$, then the order
of $\alpha$ as an element in the group $\SBr(C)$ divides $d(\ka)=\sqrt{\rk(\ka)}$.}
For example, if $\ka=\kend(F)$ with
$\rk(F)=r$, then $\alpha^r$ is realised by the endomorphism bundle 
of  $F^{\otimes  r}\otimes\det(F)^\ast$ which has trivial determinant.
\end{remark}
The cohomological description (\ref{eqn:SBrcoh}) of the special Brauer group  reveals that $\SBr(X)$ is a purely topological invariant of $X$, unlike the standard version $\Br(X)$, which depends on the Picard number and thus may change under deformations. The exact sequence (\ref{eqn:sesSBrBr})
provides a geometric interpretation for the link between jumps of the Picard number and drops of the rank of the Brauer group.

Note that both sequences, (\ref{eqn:SBrBr}) and (\ref{eqn:sesSBrBr}), actually split, but only non-canonically.

\begin{remark}\label{rem:Bfield1}
For a K3 surface $S$, it is common to call a
lift of a class $\alpha\in \Br(S)$ to an element in $H^2(S,\QQ)$
a \emph{B-field} lift of $\alpha$. Such a B-field lift then induces a lift
of $\alpha$ to a class in the special Brauer group $\SBr(S)$,  see Section \ref{sec:Bfield2} for more details.
\end{remark}


\begin{ex} For a smooth projective irreducible curve $C$ over \New{an arbitrary} field $k$, the
sequence (\ref{eqn:SBrBr}) becomes
$$\xymatrix@R=6pt{0\ar[r]&\QQ/\ZZ\ar[r]&\SBr(C)\ar[r]&\Br(C)\ar[r]&0.}$$
If $k$ is algebraically closed, then $\Br(C)$ is trivial and  $\QQ/\ZZ\cong\SBr(C)$.
Although we mostly consider complex curves, the  more general situation sheds light on the situation when applied to the scheme-theoretic generic fibre $\kc_\eta$ of a linear system $
\kc\to|h|$ on a surface.
\end{ex}

\subsection{Chern character}
Instead of looking at classes in the Brauer group $\Br(X)$ we now fix 
a class in the special Brauer group
$$\alpha\in\SBr(X).$$ Representing $\alpha$ by an Azumaya algebra
$\ka$, we can consider the abelian category $\Coh(X,\ka)$ (and its derived category). The same class $\alpha $ is also represented by any Azumaya algebra of
the form $\ka'\coloneq \ka\otimes \kend(F)$ with $F$ locally free and such that $\det(F)\cong \ko_X$. The two categories are equivalent to each other via 
\begin{equation}\label{eqn:AA'}
\Coh(X,\ka)\congpf \Coh(X,\ka'),~E\mapsto E\otimes F.
\end{equation} Since $\kend(F')\cong\kend(F)$ if and only if $F'\cong F\otimes L$
for some line bundle $L$ which, moreover, is torsion if $\det(F')\cong\det(F)$,
the equivalence (\ref{eqn:AA'})  is canonical up to equivalences given by
tensoring with torsion line bundles. In particular, there is a distinguished equivalence (\ref{eqn:AA'}) for K3 surfaces and hyperk\"ahler manifolds.

Since $X$ is assumed to be a smooth complex projective variety, we
may use singular cohomology and Hodge theory to fix cohomological invariants, but the following definitions can be adapted to other situations. As in \cite{HS2,HSeattle}, we consider the Chern character$$\ch_\ka\colon \Coh(X,\ka)\to H^*(X,\QQ), ~E\mapsto \ch_\ka(E)\coloneqq \sqrt{\ch(\ka)}^{-1} \cdot\ch(E).$$
Since we will mostly work with sheaves on curves (contained in K3 surfaces), 
multiplication with $\sqrt{\ch(\ka)}^{-1}$ is just 
division by the integer $d{(\ka)}$. For K3 surfaces, it is often more convenient to work with the Mukai vector $v_\ka(E)\coloneqq \ch_\ka(E)\cdot\td(S)^{1/2}$,
which for sheaves concentrated on curves in the K3 surface is the same as $\ch_\ka(E)$.
\smallskip

Note that on curves and surfaces $\ch_\ka$ commutes with the equivalence (\ref{eqn:AA'}). More
precisely, if $\det(F)\cong\ko$ and $\ka'=\ka\otimes\kend(F)$ then
$$\ch_{\ka'}(E\otimes F)=\ch_\ka(E),$$
for then ${\ch(\kend(F))}={\ch(F^\ast)\cdot\ch(F)}=\ch(F)^2$.


\begin{defprop}\label{defprop:Chern}
Let $\alpha\in \SBr(X)$ be a class in the special Brauer group on a variety of dimension at most two. Then the \emph{twisted Chern character} 
$$\ch_\alpha(E)\coloneqq\ch_\ka(E)$$
for $E\in \Coh(X,\alpha)\coloneqq\Coh(X,\ka)$ is independent of the choice
of the Azumaya algebra $\ka$ representing the class $\alpha\in \SBr(X)$.\qed
\end{defprop}

{\begin{remark}\label{rem:compLieb}
If $\alpha\in\SBr(X)$ is represented by an
Azumaya algebra $\ka\in H^1(X,{\rm PGl}(n))$, then $\Coh(X,\ka)$ is equivalent to
the category $\Coh(\km)_1$ of  sheaves of weight one on the associated
$\mu_n$-gerbe $\km_{\ka}$, cf.\ \cite{LiebDuke} and \cite[\S 3]{HSeattle} for 
further references. For two Azumaya algebras $\ka,\ka'\in H^1(X,{\rm PGl}(n))$ with isomorphic $\mu_n$-gerbes $\km_\ka\cong\km_{\ka'}$, i.e.\  inducing the same class in $H^2(X,\mu_n)$,
one could work with the Chern character $\ch_\km=\ch_{\km'}$ on the DM stacks $\km_\ka\cong\km_{\ka'}$ instead of $\ch_\alpha$. 

However, 
when passing from the $\mu_n$-gerbe $\km_\ka$ associated with an Azumaya algebra $\ka$  to the $\mu_{nr}$-gerbe $\km_{\ka'}$ associated with
$\ka'\coloneqq \ka\otimes\kend(E)$ for some locally free sheaf $E$ of rank $r$
and trivial determinant, the
situation becomes less clear. The induced map between the
DM stacks $\km_\ka\to\km_{\ka'}$ (over $X$), which is compatible with the natural inclusion
$\mu_n\hookrightarrow \mu_{nr}$, will, by functoriality of the Chern character,
pull-back $\ch_{\km_{\ka'}}(F\otimes E)$ to $\ch_{\km_\ka}(F)$. Here,
 $F\otimes E\in \Coh(X,\ka')\cong \Coh(\km_{\ka'})_1$ is the image of 
 $F\in \Coh(X,\ka)\cong \Coh(\km_\ka)_1$ under the equivalence $\Coh(X,\ka)\congpf \Coh(X,\ka')$, $F\mapsto F\otimes E$. Now, the fact that Proposition \ref{defprop:Chern} only holds in dimension $\leq 2$, shows that in general $\ch_{\km_\ka}(F)\ne\ch_\ka(F)$. Thus, expressing $\ch_\km(F)$ for $F\in \Coh(\km)$  in terms of classical Chern classes of $F$ viewed as coherent sheaf on $X$ seems to involve first passing to the `minimal'  gerbe of $\km$. There one would expect $\ch_\km(F)=\ch_\ka(F)$.
 
 This has bearings on the comparison between moduli spaces of sheaves over Azumaya algebras and moduli spaces of sheaves on gerbes, see Remark \ref{rem:gerbesmoduli}.
\end{remark}
}
\section{Moduli spaces of twisted sheaves}
Moduli spaces of (semi-)stable twisted sheaves have  been constructed by
Lieblich \cite{LiebDuke} and Yoshioka \cite{Yosh}, for the construction
from the point of view of modules over  Azumaya algebras see \cite{HS,Simpson}. 
Most of the classical arguments carry over. However, it is important to emphasise, that the construction of
the moduli space requires an additional choice, it is not enough to
fix a Brauer class $\alpha\in \Br(S)$. For example, one possibility is to choose  a \v{C}ech coycle $\{\alpha_{ijk}\}$  representing $\alpha$, which
allows one to use twisted sheaves. Another possibility is 
to fix a Brauer--Severi variety $P_\alpha\to S$ representing $\alpha$, which is used in \cite{Yosh}. For us it will be more convenient to view these moduli spaces as moduli
spaces of untwisted coherent sheaves $E$ \New{with a module structure } over an Azumaya
algebra $\ka$ representing a given Brauer class $\alpha$.

As we will only be interested in the birational type of moduli spaces, the difference
between stability and semi-stability and the choice of a polarisation, already needed to define stability, are all irrelevant. However, the question whether a moduli space is a coarse or a fine moduli space is important.
Over the stable locus, which is never empty in our setting, it is independent of the choice of a birational model.

\subsection{Moduli on curves}\label{sec:modulicurve}
Let us again first consider the case of a smooth projective irreducible curve $C$ over an arbitrary field $k$.  The discussion should be compared
to the one by Lieblich e.g.\ \cite[Sec.\ 3.2.2]{LiebComp} using the language of sheaves on
$\mu_n$-gerbes, see also the introduction of Section \ref{sec:ModSurf}.

\begin{definition}
For a class $\alpha\in \SBr(C)$ in the special Brauer group represented by an Azumaya algebra $\ka$ and $r\in \ZZ$, $d\in \QQ$ we denote by
$M_\alpha(r,d)$ the moduli space of semi-stable vector bundles $E\in \Coh(C,\ka)$ with $\ch_\ka(E)=(r,d)$. For the special case $r=1$ we use the notation
$$\Pic_\alpha^d(C)\coloneqq M_\alpha(1,d)$$ and call it the \emph{twisted Picard group} of
degree $d$.
\end{definition}

There is no ambiguity in this definition, as for any other Azumaya algebra $\ka'$ representing
$\alpha\in \SBr(C)$ there exists an equivalence $\Coh(C,\ka)\cong\Coh(C,\ka')$
unique up to tensoring with torsion line bundles which identifies
semi-stable vector bundles of Mukai vector $\ch_\ka=(r,d)$ in $\Coh(C,\ka)$ with those with $\ch_{\ka'}=(r,d)$ in $\Coh(C,\ka')$. However, the isomorphism between
two moduli spaces defined by means of different Azumaya algebras representing
the class in $\SBr(C)$ is natural only up to tensoring with torsion line bundles.

\New{Note that the degree  $d(\ka)=\sqrt{\rk(\ka)}$ of $\ka$ divides the rank
$\rk(E)$ (as a coherent sheaf) 
of any locally free sheaf $E\in \Coh(C,\ka)$. }
Therefore, the constant coefficient of $\ch_\ka(E)$ is indeed an integer,
which explains why we fixed $r\in \ZZ$. However, this is not true for the degree $d$, which is usually only contained in $(1/d{(\ka)})\ZZ$.

\begin{remark}\label{rem:range}
If $E\in \Coh(C,\ka)$ is (semi-)stable with 
$\ch_\ka(E)=(r,d)$, then also any line bundle twist
$E\otimes L\in \Coh(C,\ka)$ is (semi-)stable and
$\ch_\ka(E\otimes L)=(r,d+r\cdot \deg(L))$. So, if there exists a line bundle $L$ of degree one on $C$, then for fixed $r$ every moduli space $M_\alpha(r,d)$ is isomorphic to one with $d\in(1/d{(\ka)})\ZZ\cap[0,r)$. If we allow ourselves to twist only with line bundles of degree in $m\ZZ$, then $[0,r)$ has to be replaced by $[0,m\cdot r)$.
\end{remark}

\begin{remark}\label{rem:Brauertorsor}
 (i) All non-empty $\Pic^d_\alpha(C)$, $d\in \QQ$, $\alpha\in\SBr(C)$ are torsors for $\Pic^0(C)$ and this torsor structure does not depend on the choice of the Azumaya algebra either. If $k$ is algebraically closed, all non-empty $\Pic^d_\alpha(C)$ are trivial torsors and, therefore, non-naturally isomorphic to $\Pic^0(C)$.
 \smallskip
 
(ii) {Assume that there exists an $E\in \Coh(C,\ka)$ defining a point
in $\Pic_\alpha^d(C)$. Then $\rk(E)=d{(\ka)}$ and, therefore, the natural
injection $\ka\,\hookrightarrow \kend(E)$ is an isomorphism. For the latter, use that both sheaves are locally free of the same rank and with trivial determinant.} In particular, $$\alpha\in \New{\Pic}(C)\otimes\QQ/\ZZ\subset\SBr(C).$$
 {If, moreover, there exists a line bundle $L$ on $C$
  with $L^{\rk(E)}\cong\det(E)$,} then 
$\ka\cong \kend(E\otimes L^\ast)$ with $\det(E\otimes L^\ast)\cong \ko_X$ and,
hence,  $\alpha\in \SBr(C)$ is trivial. In particular,  for $k$ algebraically closed, the non-emptyness of $\Pic^d_\alpha(C)$
with $d$ an integer(!) implies that $\alpha\in \SBr(C)\cong\QQ/\ZZ$ is trivial,  because then {$\rk(E)\mid \deg(E)$.}

This is no longer true for $d\not\in\ZZ$. Indeed, any vector bundle $F$ of rank
$r$ can be viewed as a stable sheaf over $\ka=\kend(F)$
with $\ch_\ka(F)=1+\mu(F)$, i.e.\ $F\in \Pic^{\mu(F)}_\alpha(C)$
for $\alpha=[\ka]\in \Pic(C)\otimes\QQ/\ZZ\subset\SBr(C)$.
\end{remark}

 Let us now only fix an ordinary Brauer class $\alpha\in \Br(C)$. Then the moduli space $M_\alpha(r)$ of all stable twisted sheaves  of rank $r$ is still well defined
 but only locally of finite type. For example,  $$\Pic_\alpha(C)=M_\alpha(1)$$ parametrises all locally free   $E\in \Coh(C,\ka)$ of rank
 $\rk(E)=d{(\ka)}$. The definition does not depend on the choice
 of the Azumaya algebra $\ka$ representing $\alpha\in \Br(C)$.
 
Note that $ \Pic_\alpha(C)$ is a countable disjoint union of smooth projective varieties which are torsors for $\Pic^0(C)$. Moreover, it is induced
from $\Pic_{\tilde \alpha}^0(C)$, where $\tilde\alpha\in\SBr(C)$ is an
arbitrary lift of $\alpha$.
For later use we state this as the following.  
  
 \begin{prop}\label{prop:BrSBrPictorsor}
 Let $\tilde\alpha\in \SBr(C)$ be a lift of a class $\alpha\in \Br(C)$. 
Then the Picard scheme $\Pic_\alpha(C)$ is naturally a torsor for the group scheme $\Pic(C)= \bigsqcup \Pic^d(C)$, where the action is given by tensor product. More precisely, $$\Pic_{\alpha}(C)\cong (\Pic(C)\times \Pic_{\tilde\alpha}^0(C))/\Pic^0(C)$$ as torsors for $\Pic(C)$.\qed
\end{prop}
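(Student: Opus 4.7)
The strategy is twofold. First, I would show that tensor product by line bundles endows $\Pic_\alpha(C)$ with the structure of a $\Pic(C)$-torsor, independently of the choice of Azumaya representative of $\alpha$. Second, I would exhibit the claimed isomorphism as the natural torsor map $\phi\colon\Pic(C)\times\Pic_{\tilde\alpha}^0(C)\to\Pic_\alpha(C)$, $(L,E)\mapsto L\otimes E$, descending to the quotient.

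For the torsor claim, fix an Azumaya algebra $\ka$ representing $\alpha$ and pick any $E\in\Pic_\alpha(C)$. By definition $\rk(E)=d(\ka)$, so Remark \ref{rem:Brauertorsor}(ii) identifies $\ka\isomor\kend(E)$. Morita equivalence for Azumaya algebras on curves then yields an equivalence of categories
\[
\Coh(C,\ka)\cong\Coh(C),\qquad F\longmapsto \sHom_\ka(E,F),
\]
with quasi-inverse $N\mapsto E\otimes N$ (the tensor being over $\ko_C$, with the obvious $\ka$-action). Under this equivalence, locally free $\ka$-modules of rank $d(\ka)$ correspond precisely to line bundles on $C$. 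Consequently, every $E'\in\Pic_\alpha(C)$ is of the form $E\otimes L$ for a uniquely determined $L\in\Pic(C)$, which is exactly the statement that the tensor product action of $\Pic(C)$ on $\Pic_\alpha(C)$ is free and transitive. Independence of $\ka$ follows from the canonical equivalence (\ref{eqn:AA'}) up to twist by torsion line bundles, which acts trivially on the torsor structure.

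Granted the torsor property, consider $\phi(L,E)=L\otimes E$. It is $\Pic(C)$-equivariant for the action on the first factor, surjective by transitivity, and it factors through the free $\Pic^0(C)$-action
\[
M\cdot(L,E)=(L\otimes M^{-1},\,E\otimes M),
\]
since $L\otimes E\cong(L\otimes M^{-1})\otimes(E\otimes M)$. To check injectivity of the induced map from the quotient: if $L\otimes E\cong L'\otimes E'$ with $E,E'\in\Pic_{\tilde\alpha}^0(C)$, then by Remark \ref{rem:Brauertorsor}(i) there is a unique $M\in\Pic^0(C)$ with $E'\cong M\otimes E$; freeness of the tensor action on $\Pic_\alpha(C)$ then forces $L'\cong L\otimes M^{-1}$, so $(L',E')$ lies in the $\Pic^0(C)$-orbit of $(L,E)$. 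Hence $\phi$ descends to a $\Pic(C)$-equivariant bijection, i.e.\ a torsor isomorphism.

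The main obstacle I expect is the Morita-theoretic step verifying that rank-$d(\ka)$ objects of $\Coh(C,\ka)$ correspond to line bundles under $\sHom_\ka(E,-)$; once this is in hand, both freeness of the action and the descent statement are essentially formal. Everything else reduces to a careful bookkeeping of how $\Pic(C)=\bigsqcup_d\Pic^d(C)$ acts by shifting the $\QQ$-valued degree $d=d(\ka)^{-1}\deg(E)$ used to decompose $\Pic_\alpha(C)$.
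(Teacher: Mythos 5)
The paper itself prints no argument here---the proposition carries a \qed{} and is presented as a summary of the preceding remarks---so your Morita-theoretic route is a reasonable way to actually substantiate it, and the second half of your argument (the contracted product $(\Pic(C)\times\Pic^0_{\tilde\alpha}(C))/\Pic^0(C)$ mapping to $\Pic_\alpha(C)$ via $(L,E)\mapsto L\otimes E$) is correct and essentially formal once the torsor property is in hand.

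The gap is in your very first step, ``pick any $E\in\Pic_\alpha(C)$''. The curve $C$ lives over an arbitrary field $k$ (in the paper's applications, $k=\CC(t_1,\ldots,t_n)$), and for a nontrivial class $\alpha\in\Br(C)$ the scheme $\Pic_\alpha(C)$ has \emph{no} $k$-point represented by an honest locally free $\ka$-module of rank $d(\ka)$: as Remark \ref{rem:Brauertorsor}(ii) records, the existence of such an $E$ forces $\ka\cong\kend(E)$ and hence $\alpha=0$ in $\Br(C)$. So, read literally, your argument establishes simple transitivity only in the case where the torsor is trivial---which is precisely the case the proposition is not about (cf.\ Proposition \ref{prop:BrTorsCurves}, which sends $\alpha$ to the possibly nontrivial class of this torsor in $H^1(k,\Pic(\bar C))$). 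The repair is standard but has to be stated: either base change to $\bar k$ (or any splitting field), where $\Br(\bar C)=0$ guarantees $\ka_{\bar C}\cong\kend(E)$ and your Morita equivalence shows the tensor action of $\Pic(\bar C)$ on $\Pic_\alpha(\bar C)$ is simply transitive, and then descend (the action map itself is defined over $k$ on the level of families, and being a torsor can be checked after faithfully flat base change); or run the entire argument on $T$-valued points of the moduli functor, where the Morita step works relatively. The same caveat applies to your injectivity check for $\phi$, which again manipulates individual sheaves $E,E'$ rather than geometric points or families. With that adjustment the proof goes through.
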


{We shall need the following general fact, cf.\ 
\cite[Sec.\ 5.1.3]{LiebPhD} or 
\cite[Prop.\ 3.2.2.6]{LiebComp} for the version for $\mu_n$-gerbes. For completeness sake, we include a sketch of the proof.}

\begin{prop}\label{prop:BrTorsCurves}
Let $C$ be a smooth projective curve over a field $k$. Then the map
\begin{equation}\label{eqn:HSBr}
\Br(C)\cong H^2_{\text{\rm \'et}}(C,\GG_m)\to H^1(k,\Pic(\bar C))
\end{equation}
induced by the Hochschild--Serre spectral sequence maps a Brauer class $\alpha\in \Br(C)$ to the 
class of the $\Pic(C)$-torsor $\Pic_\alpha(C)$.
\end{prop}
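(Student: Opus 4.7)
The plan is to represent $\alpha \in \Br(C)$ by an Azumaya algebra $\ka$ of degree $n$, trivialise it after base change to $\bar C$ as $\ka_{\bar C} \cong \kend(F)$ for a rank-$n$ vector bundle $F$ on $\bar C$ (possible because $\Br(\bar C) = 0$ by Tsen's theorem), and then reduce the statement to a direct comparison of 1-cocycles with values in $\Pic(\bar C)$. Such an $F$ is unique up to tensoring with a line bundle on $\bar C$. For each $\sigma \in \mathrm{Gal}(\bar k/k)$ the pullback $\sigma^\ast F$ still satisfies $\kend(\sigma^\ast F) \cong \ka_{\bar C}$, hence $\sigma^\ast F \cong F \otimes L_\sigma$ for a unique $L_\sigma \in \Pic(\bar C)$. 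The assignment $\sigma \mapsto L_\sigma$ is a 1-cocycle and defines a class $c(\alpha) \in H^1(k, \Pic(\bar C))$; the proposition then amounts to the twin identifications that $c(\alpha)$ equals both the image of $\alpha$ under (\ref{eqn:HSBr}) and the class of the $\Pic(C)$-torsor $\Pic_\alpha(C)$.

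For the torsor identification, recall from Remark \ref{rem:Brauertorsor} that there is a canonical isomorphism $\Pic(\bar C) \isomor \Pic_\alpha(\bar C)$, $L \mapsto F \otimes L$. Transporting the natural Galois action on $\Pic_\alpha(\bar C)$ across this isomorphism and using $\sigma^\ast F \cong F \otimes L_\sigma$ yields the twisted action $\sigma \cdot L = L_\sigma \otimes \sigma^\ast L$; by Galois descent $\Pic_\alpha(C)$ is therefore precisely the twist of $\Pic(\bar C)$ as a $\Pic(\bar C)$-torsor by the cocycle $(L_\sigma)$, i.e.\ it represents $c(\alpha)$.

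For the Hochschild--Serre identification, I would run an explicit \v{C}ech computation. Realising $\ka$ as a $\mathrm{PGL}_n$-bundle, choose a transition 1-cocycle $\{g_{ij}\}$ on some étale cover of $C$ and lift each $g_{ij}$ locally to $\tilde g_{ij} \in \mathrm{GL}_n$; then the 2-cocycle $\alpha_{ijk} = \tilde g_{ij} \tilde g_{jk} \tilde g_{ki}^{-1} \in \GG_m$ represents $\alpha$. Over $\bar C$ the isomorphism $\ka_{\bar C} \cong \kend(F)$ provides a global $\mathrm{GL}_n$-lift of $\{g_{ij}\}$, exhibiting $\alpha_{ijk}$ as the coboundary of a 1-cochain $\beta_{ij}$ with values in $\GG_m$. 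The edge map (\ref{eqn:HSBr}) then sends $\alpha$ to the class of the 1-cocycle $\sigma \mapsto [\sigma^\ast \beta \cdot \beta^{-1}] \in H^1(\bar C, \GG_m) = \Pic(\bar C)$, and unwinding the defining relation $\sigma^\ast F \cong F \otimes L_\sigma$ identifies this class with $[L_\sigma]$.

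The main technical obstacle is precisely this last identification, which ultimately reduces to the commutativity of the natural square of boundary maps relating the short exact sequence $\GG_m \to \mathrm{GL}_n \to \mathrm{PGL}_n$ (whose boundary produces $L_\sigma$ as the obstruction to descending $F$ from $\bar C$ to $C$) with the Leray filtration on $H^2(C, \GG_m)$ (whose edge map is (\ref{eqn:HSBr})). Once this compatibility is spelled out on cochains, both halves of the comparison follow at once, completing the proof.
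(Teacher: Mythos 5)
Your proposal follows essentially the same route as the paper's proof: both reduce the statement to the commutativity of the square relating the boundary map of $\GG_m\to{\rm GL}(n)\to{\rm PGL}(n)$ to the Hochschild--Serre edge map, trivialise $\ka$ over $\bar C$ as $\kend(F)$ using $\Br(\bar C)=0$, and read off the descent cocycle $\sigma\mapsto L_\sigma$ with $\sigma^\ast F\cong F\otimes L_\sigma$, identifying it with the class of the torsor $\Pic_\alpha(C)$. Your version is simply more explicit (the \v{C}ech verification of the compatibility, which the paper asserts as a commutative diagram); the only point to watch is that $L_\sigma$ is unique only once $\sigma^\ast F\cong F\otimes L_\sigma$ is required to be an isomorphism of $\ka$-modules rather than of plain vector bundles.
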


There is a version of this result for arbitrary schemes $X$, but the map 
(\ref{eqn:HSBr}) is then only defined on the kernel of $\Br(X)\to \Br(\bar X)$.

\begin{proof}
The map (\ref{eqn:HSBr}) is part of the commutative diagram
$$\xymatrix{\Br(C)\cong H_{\text{\rm \'et}}^2(C,\GG_m)\ar[r]& H^1(k,H_{\text{\rm \'et}}^1(\bar C,\GG_m))\\
H^1(C,{\rm PGL}(n))\ar[u]\ar[r]&H^0(k,H^1(\bar C,{\rm PGL}(n))),\ar[u]&}$$
where the right vertical map is the boundary map induced by the short exact sequence $$\xymatrix{0\ar[r]&\Pic(\bar C)\cong H^1(\bar C,\GG_m)\ar[r]&H^1(\bar C,{\rm GL}(n))\ar[r]&H^1(\bar C,{\rm PGL}(n))\ar[r]&0.}$$
Computing this boundary map explicitly shows
that $\ka\in H^1(C,{\rm PGL}(n))$ is mapped to the torsor which parametrises
all $E\in \Coh(C,\ka)$ with $\kend(E)\cong \ka$, which is nothing but $\Pic_\alpha(C)$.
\end{proof}

\begin{remark}\label{rem:expand}
 Let us elaborate on Remark \ref{rem:range}.

(i)  For simplicity we first assume that there exists a
line bundle $L_0$ of degree one on $C$ {and so, in particular,
$\Pic^1(C)(k)$ is not empty.}
 Consider a class $\alpha=(p/r)\cdot L_0\in \QQ/\ZZ\subset\SBr(C)$ with $p,r\in \ZZ$ coprime, so that $|\alpha|=r$, and pick a locally
free sheaf $F$  on $C$ with $\rk(F)=r$ and $\det(F)=L^p_0$. Then $\ka_F\coloneqq\kend(F)$ represents the class $\alpha\in \SBr(C)$ and locally free sheaves
 $E\in \Coh(C,\ka_F)$ with $\ch_{\ka_F}(E)=(1,d)$ are all of the form $E\cong F\otimes L$ for some line bundle $L$. 
 
 Since  $(1/r)\deg(F\otimes L)=(p/r)+\deg(L)$, this shows that $ d\equiv(p/r)$ modulo $\ZZ$, i.e.\ $\bar d=\alpha\in \QQ/\ZZ$. In other words,
 up to isomorphisms induced by multiplication with line bundles there
 exists only one non-empty twisted Picard variety $\Pic_\alpha^\alpha(C)$ (admittedly,  a somewhat confusing notation) which is furthermore non-naturally isomorphic to $\Pic^0(C)$.\smallskip
 

 (ii)  Let us now consider the case that the minimal positive degree of a line bundle
 $L_0$ on $C$ is $m$\New{, also called the index of $C$}. Similar arguments as above show the following: For a given $\alpha =(p/r)\cdot L_0\in\New{\Pic(C)}\otimes\QQ/\ZZ\subset\SBr(C)$, there exist, up to tensor products with line bundles,
 at most  $m$ twisted  Picard varieties $\Pic_\alpha^{d +i}(C)$, $i=0,\ldots,m-1$,
where $d=(m\cdot p/r)$, all torsors for  $\Pic^0(C)$. \New{In fact, there might be even fewer, as the index can be replaced by the period of the curve, i.e.\ the minimal $d$ such
that $\Pic^d(C)$ is a trivial torsor.}\smallskip

(iii) The two situations considered above will later be mixed as follows, \New{ cf.\ also
\cite[\S 5.1.2]{LiebPhD}}. We will consider curves $C\subset S$ in a complex
 projective K3 surface. In this case, there clearly exists a line bundle of degree one
 on each individual $C$, so that we can consider $\alpha=(p/r)\cdot L_0\in \SBr(C)$ with $\deg(L_0)=1$. However, in order to let $C$ vary in its linear system, we only allow twists by line bundles $L$ of degree $m$, where $m$ is determined by $(\NS(S).[C])=m\ZZ$. Thus, one considers
 $\Pic_\alpha^{d+i}(C)$ with $d=(p/r)$ and $i=0,\ldots,m-1$.
 \end{remark}

\subsection{Moduli on surfaces}\label{sec:ModSurf}
{Let us now turn to sheaves on \New{projective K3 surfaces over an algebraically closed field.}
Again, the discussion should be compared to the work of Lieblich e.g.\ in
\cite[Sec.\ 5.1]{LiebPhD}. In particular, he already introduced twisted Picard schemes for fibred surfaces. Note that in our setting, the curves are not necessarily the fibres
of a morphism but elements in a linear system.}\smallskip

According to Definition--Proposition \ref{defprop:Chern}, the 
twisted Chern character $\ch_\alpha(E)$ \New{is independent of the choice of the Azumaya algebra,} i.e.\ 
$\ch_\ka(E)=\ch_{\ka'}(E\otimes F)$ for $\ka'=\ka\otimes\kend(F)$ with $\det(F)\cong\ko_S$. Furthermore, the equivalence $\Coh(S,\ka)\cong\Coh(S,\ka')$
preserves stability with respect to a polarisation $\ko(1)$ which we will suppress
in the notation.\footnote{Note that in general only $\mu$-stability is preserved
under this equivalence. However, on surfaces and under the assumption that $F$ has trivial determinant, also Gieseker stability is preserved.}
In the case of K3 surfaces, it is more convenient to work
with the (twisted) Mukai vector $v_\alpha(E)=\ch(E)\cdot\td(S)^{1/2}$, which is
also well defined.

Hence, by \cite{HS,Simpson,Yosh} the moduli space $M_\alpha(v)$ of semi-stable $\alpha$-twisted sheaves with twisted Mukai vector $v_\alpha(E)=v$ is well defined as long as $\alpha\in\SBr(S)$ is fixed as a class in the special Brauer group.

{
\begin{remark}\label{rem:gerbesmoduli}
Once again, there should be a way to phrase everything in terms
of Lieblich's moduli spaces of sheaves on
$\mu_n$-gerbes, cf.\ \cite{LiebDuke,LiebComp}. In particular,
if a class $\alpha\in \SBr(X)$ is represented by a $\mu_n$-gerbe $\km$,
one should be able to compare moduli spaces of sheaves on $\km$
with a certain moduli space of sheaves on the $\mu_{nk}$-gerbe naturally
associated with $\km$ via the inclusion $\mu_n\subset \mu_{nk}$. However, comparing Chern characters, and so Hilbert polynomials, and stability is tricky in general,
see Remark \ref{rem:compLieb} and \cite[Lem.\ 2.3.2.8]{LiebDuke}.
\end{remark}}

We will be mainly interested in the case of sheaves $E$ on $S$ supported on curves $C\subset S$ in a (generically smooth) linear system $|h|$, in which case
there is no difference between the Mukai vector $v(E)$ and the Chern character $\ch(E)$. More precisely, the Mukai vector is in this case of the form $v(E)=(0,r\cdot h,s)$, where  $s=\chi(E)$. If $E$ is a line bundle on $C$, then $r=1$. In general, if $C$ is integral, $r$ is the rank of $E$
as a sheaf on $C$.\smallskip

We can now twist the situation with respect to a class
$\alpha\in\SBr(S)$ in the special Brauer group. Let us spell this out in detail.
Choosing an Azumaya algebra $\ka$
representing $\alpha$, we consider all sheaves $E\in \Coh(S,\alpha)=\Coh(S,\ka)$
with $v_\alpha(E)=v_\ka(E)=\ch_\ka(E)=(0,r\cdot h,s)$,  where  $s=\chi(E)$. 

\begin{definition} Consider a K3 surface $S$ with a generically smooth, complete linear system $\kc\to |h|$ and a special Brauer class $\alpha\in \SBr(S)$. Then we denote the moduli space $M_\alpha(0, h,s)$ of all semi-stable sheaves $E\in \Coh(S,\alpha)=\Coh(S,\ka)$ with $v_\alpha(E)=v_\ka(E)$ by 
$$\overline{\Pic}_\alpha^d(\kc/|h|)\coloneqq M_\alpha(0,h,s),$$
with $d\coloneqq s+g-1$,
and call it the \emph{compactified twisted relative Picard variety}.

\end{definition}

We will abuse the notation slightly and denote by
$$\New{\Pic_\alpha^d(\kc/|h|_{{\rm sm}})}\subset \overline{\Pic}_\alpha^d(\kc/|h|)$$
the open subset $\pi^{-1}(|h|_{{\rm sm}})$ of twisted sheaves concentrated on smooth curves $C\in |h|$. Here, 
\begin{equation}\label{eqn:piproj}
\pi\colon\overline\Pic_\alpha^d(\kc/|h|)\to|h|
\end{equation}
is the natural projection.
Note that stability depends on the choice of a polarisation and so $\overline{\Pic}_\alpha^d(\kc/|h|)$  depends on it as well. However, the open part 
\New{$\Pic_\alpha^d(\kc/|h|_{{\rm sm}})$} does not and as we are only interested in these moduli spaces up to birational isomorphism (over the \New{linear} system $|h|$), we can safely ignore the polarisation. 

\Old{Let us now turn to the case of sheaves $E$ on a K3 surface $S$ which
are supported on curves $C\subset S$ in a linear system $|h|$.
In this case, there is no difference between the Mukai vector $v(E)=\ch(E)\cdot\td(S)^{1/2}$ and the Chern character $\ch(E)$, but it is often more convenient to work with the former. So we have $v(E)=(0,r\cdot h,s)$, where  $s=\chi(E)$. If $E$ is a line bundle on $C$, then $r=1$. In general, if $C$ is integral, $r$ is the rank of $E$
as a sheaf on $C$.\smallskip

The situation can be twisted with respect to any class
$\alpha\in\SBr(S)$ in the special Brauer group. More precisely, choosing an Azumaya algebra $\ka$
representing $\alpha$, we can consider all sheaves $E\in \Coh(S,\alpha)=\Coh(S,\ka)$
with $v_\alpha(E)=v_\ka(E)=\ch_\ka(E)=(0,r\cdot h,s)$. According to Definition--Proposition \ref{defprop:Chern}, the twisted Mukai vector is independent of the choice of $\ka$.

\begin{definition}
Consider a K3 surface $S$ with a linear system $|h|$ and a special Brauer
class $\alpha\in \SBr(S)$. Then for $r\in \ZZ$ and $s\in \QQ$, we denote by $M_\alpha(0,r\cdot h,s)$ the moduli
space of semi-stable sheaves $E\in \Coh(S,\alpha)=\Coh(S,\ka)$ with $v_\alpha(E)=v_\ka(E)=
(0,r\cdot h,s)$.

For the universal family $\kc\to |h|$ of curves of genus $g$ in an (ample) linear system, we set $d=s+(g-1)$
and define the \emph{twisted relative Picard variety} as $$\Pic_\alpha^d(\kc/|h|)\coloneqq M_\alpha(0,h,s).$$ \end{definition}

To simplify the discussion, we suppress any mentioning of a polarisation on $S$ needed to properly define stability. This is relevant only for sheaves concentrated on reducible curves in the linear system and so choosing different polarisations will lead to birational moduli spaces as long as $|h|$ contains one integral curve.
So, the reader may prefer to think of the above moduli spaces as birational equivalence classes and not isomorphism types. More precisely, only the scheme-theoretic generic fibre $\Pic_{\alpha|_{\kc_\eta}}^d(\kc_\eta)$ of the projection 
\begin{equation}\label{eqn:piproj}
\pi\colon\Pic_\alpha^d(\kc/|h|)\to|h|.
\end{equation}
really matters.}

\begin{remark}\label{rem:twistint}
 The fibre $\pi^{-1}(C)$ of (\ref{eqn:piproj})  over a smooth curve $C\in |h|$
consists of all (stable) locally free $\ka|_C$-sheaves of rank $d{(\ka)}$
and degree $d\cdot d{(\ka)}$. Thus, it is nothing but $\Pic^d_{\alpha|_C}(C)$ as
introduced in the previous section. In particular, if not empty, the fibre is naturally a torsor for $\Pic^0(C)$. {More globally, if not empty, 
$\Pic_\alpha^d(\kc/|h|)\to |h|_{{\rm sm}}$ is a torsor for the abelian group
scheme $\Pic^0(\kc/|h|_{{\rm sm}})\to |h|_{{\rm sm}}$.}

Note that, according to Remark \ref{rem:Brauertorsor},  if the fibre is non-empty and $d\in \ZZ$ is an integer, then $\alpha|_C=1\in \SBr(C)$.
%
\end{remark}

Tensor product with a line bundle $L$ on $S$ with $(L.h)=m$
defines  an isomorphism
$$
\overline\Pic^d_\alpha(\kc/|h|)\congpf \overline\Pic^{d+m}_\alpha(\kc/|h|)
~\text{  and }~
\Pic^d_\alpha(\kc/|h|_{{\rm sm}})\congpf \Pic^{d+m}_\alpha(\kc/|h|_{{\rm sm}}).$$

\Old{Tensor product with a line bundle $L$ on $S$ with $(L.h)=m$
defines  isomorphisms
$$M_\alpha(0,r\cdot h,s)\congpf M_\alpha(0,r\cdot h,s+r\cdot m)~\text{  and }~
\Pic^d_\alpha(\kc/|h|)\congpf \Pic^{d+m}_\alpha(\kc/|h|).$$}
As in the case of curves, $s$ and $d$ need not be integers. But note that 
 if for a fixed
class $\alpha\in\SBr(S)$ and two $d,d'\in \QQ$ the two relative twisted Picard varieties
$\Pic^d_\alpha(\kc/|h|_{{\rm sm}})$ and $\Pic_\alpha^{d'}(\kc/|h|_{{\rm sm}})$ are both not empty, then $d'=d+i $ for some integer $i$, see Remarks  \ref{rem:range} \& \ref{rem:expand}.

\begin{remark}\label{rem:nodist}
Using Remark \ref{rem:expand}, we find that up to tensoring with line bundles
on $S$ there are \New{at most} $m$ \New{$\alpha$-}twisted relative Picard schemes $\Pic_{\alpha}^{d+i}(\kc/|h|_{{\rm sm}})$
with $i=0,\ldots,m-1$. Here, as before, $m$ satisfies $m\ZZ=(\NS(S).h)$, i.e.\ it is the divisibility of $h$ as an element of the lattices $\NS(S)$,  and the rational number
$d=p/r$ is determined by writing $\alpha|_C=(p/r)\cdot L_0$ for some line bundle
$L_0$ of degree one on $C$ and with coprime $p$ and $r$.

In particular, unless $d=0$, there is no preferred choice for the degree $d+i$ that would work
well with the group structure of $\SBr(S)$. This observation will give rise to introducing the restricted special Brauer group in the next section.
\end{remark}

Let us conclude this section by explaining a relative version of Remark \ref{rem:Brauertorsor}.

\begin{remark}\label{rem:Brauertorsor2}
If instead of a class in $\SBr(S)$ we only fix a Brauer class $\alpha\in \Br(S)$, then
we define \Old{$$\Pic_\alpha(\kc/|h|)\to |h|$$}
$$\overline\Pic_\alpha(\kc/|h|)\to |h|$$
 as the moduli space of all (semi-)stable sheaves $E\in \Coh(S,\ka)$ with Mukai vector $\ch_\ka(E)=(0,h,\ast)$.
This is a countable disjoint union of projective schemes over $|h|$. The scheme-theoretic generic fibre is $\Pic_{\alpha|_{\kc_\eta}}(\kc_\eta)$, where $\kc_\eta$ is
the generic fibre of $\kc\to|h|$. 

Restricting to smooth curves in $|h|$, the scheme
$\Pic_\alpha(\kc/|h|_{{\rm sm}})\to |h|_{{\rm sm}}$ is a torsor for the 
countable union of projective group schemes
$\Pic(\kc/|h|_{{\rm sm}})=\bigsqcup\Pic^d(\kc/|h|_{{\rm sm}}) \to |h|_{{\rm sm}}$ 

\end{remark}
\section{The restricted special Brauer group}
From the perspective of moduli spaces of twisted sheaves  (or, rather, of modules over Azumaya algebras) on curves contained in
K3 surfaces, not all classes $\alpha\in \SBr(S)$ are relevant.
Only classes in a drastically smaller group give naturally rise to non-empty moduli spaces, cf.\ Remarks \ref{rem:Brauertorsor} \& \ref{rem:nodist}. This leads to the notion of the restricted special Brauer group.

\subsection{Restricted special Brauer group}
\New{Using the dual of the inclusion $\NS(S)\subset H^2(S,\ZZ)$ and the unimodularity of $H^2(S,\ZZ)$, one defines a natural map
\begin{equation}\label{eqn:SBR}
\SBr(S)\cong H^2(S,\ZZ)\otimes \QQ/\ZZ\to \NS(S)^\ast\otimes \QQ/\ZZ.
\end{equation}
The \emph{restricted special Brauer group} $\SBro(S)\subset \SBr(S)$ is then defined as the kernel of this map. Equivalently, $\SBro(S)\subset \SBr(S)$
is the subgroup that annihilates $\NS(S)$ under the natural 
pairing
\begin{equation}\label{eqn:Pairing}
\SBr(S)\times \NS(S)\cong H^2(S,\QQ/\ZZ)\times\NS(S)\to \QQ/\ZZ.
\end{equation}

}

\New{From the definition of the restricted special Brauer group $\SBro(S)$ we deduce the following commutative
diagram
\begin{equation}\label{eqn:sesASBro}
\xymatrix{0\ar[r]&A(S)\ar@{^(->}[d]\ar[r]&\SBro(S)\ar@{^(->}[d]\ar[r]&\Br(S)\ar@{=}[d]\ar[r]&0\\
0\ar[r]&\NS(S)\otimes\QQ/\ZZ\ar@{->>}[d]\ar[r]&\SBr(S)\ar[r]\ar@{->>}[d]&\Br(S)\ar[r]&0\\
&\NS(S)^\ast\otimes\QQ/\ZZ\ar@{=}[r]&\NS(S)^\ast\otimes\QQ/\ZZ.&&}
\end{equation}
It is not difficult to check that $\SBro(S)$ still surjects onto $\SBr(S)$.
}
%


\New{The subgroup $\SBro(S)$ parametrises all Brauer classes with respect to which every complete linear system can be twisted. However, once a generically smooth, complete linear system $|h|$ on $S$
is fixed, we replace $\SBro(S)$ by a larger subgroup
$$\SBro(S)\subset \SBr(S,h)\subset\SBr(S).$$

To define $\SBr(S,h)$, we consider again the pairing (\ref{eqn:Pairing})
and let $\SBr(S,h)\subset \SBr(S)$ be the annihilator of $h\in \NS(S)$, i.e.\
the set of all classes $\alpha\in \SBro(S)\cong H^2(S,\ZZ/\QQ)$ with
$(\alpha.h)=0$ in $\QQ/\ZZ$.
Analogously, we define $A(S,h)\subset \NS(S)\otimes \QQ/\ZZ$
as the annihilator of $h\in \NS(S)$ with respect to the pairing
$$\left(\NS(S)\otimes \QQ/\ZZ\right)\times \NS(S)\to\QQ/\ZZ.$$

Altogether, we have the commutative diagram of short exact sequences
that completes (\ref{eqn:sesASBro}):
$$\xymatrix{0\ar[r]&A(S)\ar@{^(->}[d]\ar[r]&\SBro(S)\ar@{^(->}[d]\ar[r]&\Br(S)\ar@{=}[d]\ar[r]&0\\
0\ar[r]&A(S,h)\ar@{^(->}[d]\ar[r]&\SBr(S,h)\ar@{^(->}[d]\ar[r]&\Br(S)\ar@{=}[d]\ar[r]&0\\
0\ar[r]&\NS(S)\otimes\QQ/\ZZ\ar[r]&\SBr(S)\ar[r]&\Br(S)\ar[r]&0.}$$
Again, the surjectivities on the right are straightforward to verify.}

\begin{remark}\label{rem:SBrodef}
Viewing $\SBr(S)$ as the group that parametrises all $\mu_n$-gerbes, $n\in \ZZ$, on $S$,  see Remark \ref{rem:gerbes}, we find that $\SBro(S)\subset \SBr(S)$ is the subgroup of all $\mu_n$-gerbes that become trivial on all \New{smooth, integral} curves contained  $C\subset S$.

Indeed, $\NS(S)$ is generated by classes of smooth integral
curves and for a class $\gamma\in H^2(S,\QQ)$ the condition
$(\gamma.[C])\in \ZZ$ is  equivalent to $\gamma|_C\in H^2(C,\QQ)$
being contained in $H^2(C,\ZZ)$. In other words, the special Brauer class
induced by $\gamma$ is contained in the kernel of
$$\SBr(S)\to\SBr(C)\cong H^2(C,\QQ/\ZZ).$$

Similarly, $\SBr(S,h)$ is the set of classes
$\alpha\in \SBr(S)$ with $\alpha|_C\in \SBr(C)\cong H^2(C,\QQ/\ZZ)$ being trivial for
all smooth curves $C\in |h|$.
\end{remark}
\subsection{Restricted Brauer group via moduli spaces} The restricted special Brauer group can be alternatively characterised by the non-emptiness of moduli spaces.

\New{\begin{prop}
Assume $\kc\to|h|$ is a {generically smooth}, complete linear system.
 Then 
$\SBr(S,h)\subset\SBr(S)$ is the subgroup of all special Brauer
classes such that $\Pic^0_\alpha(\kc/|h|_{\rm sm})$ is not empty.
\end{prop}}

\begin{proof}
\New{Let $\alpha\in \SBr(S,h)$ and pick any smooth curve $C\in |h|$.} By Remark \ref{rem:SBrodef}, the class $\alpha|_C\in \SBr(C)$ is trivial. Hence, $\alpha|_C$ can be represented by $\ko_C$ and any degree zero line bundle, e.g.\ $\ko_C$ itself, defines a point in $\Pic^0_{\alpha|_C}(C)$. In particular, $\Pic_\alpha^0(\kc/|h|_{\rm sm})\ne\emptyset$.

Conversely, if $\Pic^0_\alpha(\kc/|h|_{\rm sm})\ne\emptyset$ for a fixed class
$\alpha\in\SBr(S)$, 
then according to Remark \ref{rem:Brauertorsor}, $\alpha|_C\in \SBr(C)$ is trivial for all smooth curves
$C\in|h|$ and hence $\alpha\in \SBro(S\New{,h})$.
\end{proof}

It is important to emphasise that although $\alpha|_C\in \SBr(C)$ is trivial
for a class $\alpha\in \SBr(S,h)$ and any smooth curve $C\in|h|$,
the restriction $\alpha|_{\kc_\eta}\in \SBr(\kc_\eta)$ to the generic
fibre of the complete linear system $\kc\to|h|$ is trivial if and only if $\alpha$ is of the form
$(1/r)\cdot L\in \NS(S)\otimes\QQ/\ZZ$ with $\deg(L|_C)=0$. So, in particular, the associated class $\bar \alpha\in\Br(S)$ would be trivial in this case, \New{for the restriction map $\Br(S)\,\hookrightarrow\Br(\kc_\eta)$ is injective.}

\subsection{Generalised Tate--{\v{S}}afarevi{\v{c}} group}\label{sec:mainproof}
It turns out that for a given complete linear system  $\kc\to|h|$ the restricted
special Brauer group \New{$\SBr(S,h)$} does not parametrise the moduli spaces $\Pic^0_\alpha(\kc/|h|_{\rm sm })$ effectively. We will show that the map that associates with a class
$\alpha\in\SBr(S,h)$ the moduli space $\Pic^0_\alpha(\kc/|h|_{\rm sm})$ factorises via a certain quotient of $\SBro(S)\twoheadrightarrow \Sha(S,h)$.

For this purpose, we consider for any $h\in \NS(S)$ the natural map
$$\zeta_h\colon 
A(S,h)\New{\twoheadrightarrow }\ZZ/(\NS(S).h),~\varphi\mapsto \varphi(h).$$
\New{By the very definition of $A(S,h)$, this map is indeed surjective.
Note that for $h$ primitive also the restriction of $\zeta_h$ to $A(S)\subset A(S,h)$ is surjective, which has the consequence that
in the definition of the generalised Tate--{\v{S}}afarevi{\v{c}} group $\Sha(S,h)$, one could as well use the smaller  $\SBro(S)$. However, for $h=kh_0$ with $h_0$ primitive, the subgroup $\zeta_h(A(S))$  is of index $k$.}


\begin{definition} Let $\kc\to|h|$ be a generically smooth, complete linear system on a K3 surface $S$.
Then the \emph{Tate--{\v{S}}afarevi{\v{c}} group} of $(S,h)$ is defined
as $$\Sha(S,h)\coloneqq \SBro(S\New{,h})/\ker(\zeta_h)$$ and we denote the projection
by
\begin{equation}\label{eqn:SBroSha}
\xi_h\colon \SBro(S\New{,h})\twoheadrightarrow \Sha(S,h).
\end{equation}
\end{definition}

Thus,  there exists a short exact sequence
\begin{equation}\label{eqn:Shases}
\xymatrix@C=15pt{0\ar[r]&\ZZ/(\NS(S).h)\ar[r]&\Sha(S,h)\ar[r]&\Br(S)\ar[r]&0,}
\end{equation}
which together with (\ref{eqn:sesASBro}) is part of the commutative diagram


\begin{equation}\label{eqn:CDShah}
\xymatrix{
0\ar[r]&\NS(S)\otimes\QQ/\ZZ\ar[r]&\SBr(S)\ar[r]&\Br(S)\ar[r]&0\\
0\ar[r]&A(S,h)\ar@{^(->}[u]\ar@{->>}[d]_{\zeta_h}\ar[r]&\SBr(S,h)\ar@{^(->}[u]\ar[r]\ar@{->>}[d]_{\xi_h}&\Br(S)\ar@{=}[u]\ar@{=}[d]\ar[r]&0\\
0\ar[r]&\ZZ/(\NS(S).h)\ar[r]&\Sha(S,h)\ar[r]&\Br(S)\ar[r]&0.}
\end{equation}
\smallskip

\begin{ex}\label{exa:divone}
The natural projection is an isomorphism
$\Sha(S,h)\congpf \Br(S)$ if and only if $h\in\NS(S)$ has divisibility one, i.e.\
$(\NS(S).h)=\ZZ$. This is reminiscent of the case of elliptic K3 surfaces
and will be discussed in detail further below.
\end{ex}

\begin{prop}\label{prop:comptwo} Consider a  generically smooth, complete linear system $\kc\to|h|$. Then, for any two classes $\alpha_1,\alpha_2\in \SBr(S,h)$ with $\xi_h(\alpha_1)=\xi_h(\alpha_2)$, the two moduli spaces $\Pic_{\alpha_1}^0(\kc/|h|_{\rm sm})$  and $\Pic_{\alpha_2}^0(\kc/|h|_{\rm sm })$ are \New{naturally isomorphic torsors
for $\Pic^0(\kc/|h|_{\rm sm})$.} 
\end{prop}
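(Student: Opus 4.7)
The plan is to realise the difference $\beta \coloneqq \alpha_1 - \alpha_2$, which by hypothesis lies in $\ker(\zeta_h) \subset A(S) \subset \NS(S) \otimes \QQ/\ZZ \subset \SBr(S)$, as a geometric twist by a suitable vector bundle on $S$, and then to show that tensoring with this bundle yields the desired isomorphism of moduli spaces. First I write $\beta = (1/r)\cdot L$ for some integer $r \geq 1$ and $L \in \NS(S)$ (using a common denominator). The condition that $\beta$ lies in $A(S) = \NS(S)^\ast/\NS(S)$ translates into $r \mid (L.K)$ for every $K \in \NS(S)$, and the hypothesis $\zeta_h(\beta) = 0$ unpacks to $(L.h)/r \in m \ZZ$, with $m\ZZ = (\NS(S).h)$. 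Fixing $L_0 \in \NS(S)$ with $(L_0.h) = m$ and setting $k \coloneqq (L.h)/(rm) \in \ZZ$, I replace $L$ by $L - rkL_0$: this leaves $\beta$ unchanged modulo $\NS(S)$ but now achieves $(L.h) = 0$.

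Next I pick any Azumaya algebra $\ka_2$ representing $\alpha_2$ and any locally free sheaf $F$ of rank $r$ on $S$ with $\det(F) \cong L$; then $\ka_1 \coloneqq \ka_2 \otimes \kend(F)$ represents $\alpha_2 + (1/r)\cdot L = \alpha_1$, and the functor $\Phi\colon E \mapsto E \otimes F$ gives an equivalence $\Coh(S,\ka_2) \congpf \Coh(S,\ka_1)$. Since every Azumaya algebra has trivial determinant and $E$ is supported on curves, $\ch_\ka(E) = \ch(E)/d(\ka)$, and multiplicativity of the Chern character yields
$$v_{\alpha_1}(\Phi(E)) \;=\; v_{\alpha_2}(E) \cdot \ch(F)/r.$$
Applied to $v_{\alpha_2}(E) = (0,h,s)$ this gives $v_{\alpha_1}(\Phi(E)) = (0,h,s + (L.h)/r) = (0,h,s)$, thanks to the normalisation $(L.h) = 0$ arranged above. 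Hence $\Phi$ restricts to an isomorphism $\Pic^0_{\alpha_2}(\kc/|h|) \congpf \Pic^0_{\alpha_1}(\kc/|h|)$. On the generic fibre $\kc_\eta$ every rank-one twisted sheaf is automatically stable by Remark \ref{rem:Brauertorsor}, and because $F$ is pulled back from $S$ the relation $\Phi(E \otimes N) = \Phi(E) \otimes N$ holds for every $N \in \Pic^0(\kc_\eta)$; thus $\Phi|_{\kc_\eta}$ is an isomorphism of $\Pic^0(\kc_\eta)$-torsors.

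The main obstacle will be to verify that the resulting isomorphism is genuinely independent of the several auxiliary choices: the expression $\beta = (1/r)\cdot L$, the reference class $L_0$, the bundle $F$ with $\det(F) \cong L$, and the Azumaya algebra $\ka_2$. Any two such choices differ by tensoring either with a line bundle pulled back from $S$ or with an endomorphism bundle of trivial determinant, each of which acts on $\kc_\eta$ as translation by an element of $\Pic^0(\kc_\eta)$; consequently, all of the resulting maps agree as isomorphisms of $\Pic^0(\kc_\eta)$-torsors. A secondary technical point is that $\Phi$ need not preserve Gieseker (semi-)stability on the non-smooth locus of $|h|$, but since the proposition only concerns the generic fibre this is immaterial.
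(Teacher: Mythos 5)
Your argument is essentially the paper's own proof: reduce the difference class to the form $(1/r)\cdot L$ with $(L.h)=0$ using the hypothesis $\xi_h(\alpha_1)=\xi_h(\alpha_2)$, then tensor with a rank-$r$ bundle $F$ of determinant $L$ to get the torsor isomorphism on $\kc_\eta$; your Chern-character computation and discussion of choices just make explicit what the paper leaves implicit. The only small imprecision is in the last paragraph: different choices of $F$ (or of the Azumaya algebra) can change the map by translation by an element of $\Pic^0(\kc_\eta)$, so the various maps need not literally \emph{agree}, but they all remain isomorphisms of torsors, which is all the proposition asserts.
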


\begin{proof}
Write $\alpha_2=\alpha_1\cdot\gamma$ with $\gamma=(1/r)\cdot L\in \NS(S)\otimes\QQ/\ZZ$. If $\gamma\in A(S,h)\subset \SBr(S,h)$, then $r\mid(L.h)$,
and under the stronger assumption $\gamma\in \ker(\zeta_h)\subset A(S,h)$, we
find in addition a line bundle $L'\in \NS(S)$ such that $(L'.h)+(1/r)(L.h)=0$.
Thus, we can assume that $\gamma$ is of the form $(1/r)\cdot L$ with $(L.h)=0$.

Let now $F$ be a locally free sheaf on $S$ with $\det(F)\cong L$ and $\rk(F)=r$\New{, e.g.\
$F=L\oplus\ko_S^{\oplus r-1}$.}
Then $$\New{\Pic_{\alpha_1}^0(\kc/|h|_{\rm sm})\congpf \Pic_{\alpha_2}^0(\kc/|h|_{\rm sm})}, ~E\mapsto E\otimes F$$ is an isomorphism of torsors for \New{$\Pic^0(\kc/|h|_{\rm sm})$.}
\end{proof}

\begin{remark}\label{rem:ShahBr} Consider a generically smooth, complete linear system $\kc\to |h|$.
Then, without introducing any ambiguity, the proposition allows us to speak of the moduli spaces $\Pic^0_\alpha(\kc/|h|_{\rm sm})$ for any class  $\alpha\in \Sha(S,h)$
 in the Tate--{\v{S}}afarevi{\v{c}} group of $(S,h)$.
\end{remark}

\begin{remark}\label{rem:groupstructure} 
 \New{As already hinted at in the last proof, the group structure of $\Sha(S,h)$ as a quotient of $\SBr(S,h)$}  is compatible with taking twisted Picard varieties.
More precisely, for two classes $\alpha_1,\alpha_2$ the torsor
 \New{$\Pic_{\alpha_1\alpha_2}^0(\kc/|h|_{\rm sm})$ for $\Pic^0(\kc/|h|_{\rm sm})$,}
 is the quotient
 $$\Pic^0_{(\alpha_1\alpha_2)}(\kc/|h|_{\rm sm})\cong\left(\Pic^0_{\alpha_1}(\kc/|h|_{\rm sm})\times_{|h|} \Pic^0_{\alpha_2}(\kc/|h|_{\rm sm})\right)/\Pic^0(\kc/|h|_{\rm sm}),$$
 where the isomorphism is given by tensor product.
 \end{remark}
 
\subsection{All twisted Picard varieties} Although we only consider classes in the restricted special Brauer group $\SBro(S\New{,h})\subset\SBr(S)$ and only consider $d=0$, we still get all twisted Picard varieties of arbitrary degree 
for  $\kc\to|h|_{\rm sm}$. This is the next result which
morally is  a consequence of the surjectivity $\SBr(S,h)\twoheadrightarrow \Br(S)$.

\begin{prop}\label{prop:getall}
Consider a {generically smooth}, complete linear system $\kc\to |h|$. Fix $d\in \QQ$ and
$\alpha\in \SBr(S\New{,h})$, such that $\Pic_\alpha^d(\kc/|h|_{\rm sm})$ is non-empty.
 Then there exists a class $\alpha_0\in \SBro(S\New{,h})$ (or $\alpha_0\in\Sha(S,h)$) and a natural isomorphism of $\Pic^0(\kc/|h|_{\rm sm})$-torsors
 $$\Pic^0_{\alpha_0}(\kc/|h|_{\rm sm})\cong \Pic_\alpha^d(\kc/|h|_{\rm sm}).$$
\end{prop}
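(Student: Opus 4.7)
\emph{Proof plan.}
Using the surjection $\SBro(S)\twoheadrightarrow\Br(S)$ from \eqref{eqn:sesASBro}, I pick a lift $\alpha_1\in\SBro(S)$ of the Brauer class underlying $\alpha$, so that $\alpha-\alpha_1$ lies in the kernel $\NS(S)\otimes\QQ/\ZZ$ of $\SBr(S)\to\Br(S)$. Choose $L\in\NS(S)$ and $r\in\ZZ_{>0}$ with $\alpha-\alpha_1=(1/r)\cdot L$ in $\NS(S)\otimes\QQ/\ZZ$, and fix a locally free sheaf $F$ on $S$ of rank $r$ and determinant $L+r\cdot M$, for some $M\in\NS(S)$ to be tuned later. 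Then $\kend(F)$ represents $\alpha-\alpha_1$ in $\SBr(S)$, and Morita equivalence delivers $\Coh(S,\alpha_1)\cong\Coh(S,\alpha)$ via $E\mapsto E\otimes F$.

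A direct Mukai-vector computation, using $\ch(E\otimes F)=\ch(E)\cdot\ch(F)$ together with the formula $\ch_\ka(\cdot)=\ch(\cdot)/d(\ka)$ valid for sheaves concentrated on curves, shows that this equivalence sends $v_{\alpha_1}(E)=(0,h,s_0)$ to $v_\alpha(E\otimes F)=(0,h,s_0+(\det F\cdot h)/r)$. Converting into degrees via $d=s+g-1$, where $g$ is the genus of the generic fibre of $\kc\to|h|$, it therefore identifies $\Pic_{\alpha_1}^{d_0}(\kc/|h|)$ with $\Pic_\alpha^{d_0+(L\cdot h)/r+(M\cdot h)}(\kc/|h|)$.

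The core of the argument is then to choose $M$ and the lift $\alpha_1$ so that the right-hand degree equals $d$ while $d_0=0$. Varying $M\in\NS(S)$ shifts the target degree by arbitrary elements of $(\NS(S)\cdot h)=m\ZZ$; replacing $\alpha_1$ by $\alpha_1+\gamma$ for $\gamma\in A(S)\subset\SBro(S)$ keeps the lift inside the restricted special Brauer group and further shifts $(L\cdot h)/r$ by $-\zeta_h(\gamma)$ modulo $m$. On the other hand, the non-emptiness of $\Pic_\alpha^d(\kc/|h|)$ forces, through Remarks~\ref{rem:expand} and~\ref{rem:nodist}, the residue of $d$ modulo $m$ to coincide with exactly one of the $m$ cosets reachable by these combined shifts as $\alpha_1$ ranges over the lifts of $\bar\alpha$ in $\Sha(S,h)$. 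I anticipate that verifying this matching --- i.e.\ showing that the geometric bijection between lifts of $\bar\alpha$ in $\Sha(S,h)$ and the non-empty twisted Picard varieties $\Pic_\alpha^{d+i}(\kc/|h|)$, $i=0,\ldots,m-1$, agrees with the algebraic bijection induced by the degree shift above --- is the main obstacle; the very definitions $\Sha(S,h)=\SBro(S)/\ker(\zeta_h)$ and the exact sequence \eqref{eqn:Shases} are engineered to make this matching automatic.

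Taking $\alpha_0$ to be the resulting lift (well-defined as an element of $\Sha(S,h)$ by Proposition~\ref{prop:comptwo}), tensoring with the chosen $F$ supplies the natural isomorphism $\Pic_{\alpha_0}^0(\kc/|h|)\isomor\Pic_\alpha^d(\kc/|h|)$. It is compatible with the $\Pic^0(\kc/|h|_{\rm sm})$-torsor structure because tensoring by a sheaf pulled back from $S$ commutes with tensoring by line bundles on the smooth fibres of $\kc\to|h|$.
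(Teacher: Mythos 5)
Your proposal follows essentially the same route as the paper: decompose $\alpha$ into a lift $\alpha_1\in\SBro(S)$ of $\bar\alpha$ plus a class $(1/r)\cdot L\in\NS(S)\otimes\QQ/\ZZ$, realise the latter by $\kend(F)$, use $E\mapsto E\otimes F$ to shift the degree, and adjust by line bundles on $S$ and by classes in $A(S)$ to hit the prescribed $d$. The matching step you flag as the main obstacle is indeed automatic, and you can close it explicitly via Remarks \ref{rem:Brauertorsor} and \ref{rem:expand}: since $\alpha_1|_C$ is trivial on every smooth $C\in|h|$, non-emptiness of $\Pic^d_{\alpha|_C}(C)$ forces $d\equiv (L.h)/r\pmod{\ZZ}$, while your combined shifts by $(M.h)\in m\cdot\ZZ$ and by $-\zeta_h(\gamma)$, with $\zeta_h$ surjecting onto $\ZZ/m\cdot\ZZ$ as in (\ref{eqn:Shases}), reach every element of $(L.h)/r+\ZZ$; so nothing further needs to be verified.
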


\begin{proof} 
\New{To ease the notation, we will 
simply write $\Pic_\alpha^d$ instead of $\Pic_{\alpha}^d(\kc/|h|_{\rm sm})$, etc.}

As a warmup, let us first discuss the special case that
$\alpha=(1/r)\cdot L\in \NS(S)\otimes\QQ/\ZZ\subset\SBr(S)$ and $d=(L.h)/r$.
Then we let  $\alpha_0$ be the trivial class and define $$\Pic_{\alpha_0}^0=\Pic^0\congpf\Pic^d_\alpha,~E\mapsto E\otimes F.$$
Here, $F$ is a locally free sheaf on $S$ of rank $r$ and determinant $L$,
hence $\alpha=[\kend(F)]$, and $E$ is a degree zero line bundle on some smooth 
$C\in |h|$. In particular, the tensor product with $F$ is actually the tensor product with $F|_C$ on $C$, cf.\ Remarks \ref{rem:Brauertorsor} \& \ref{rem:expand}.
The existence of the above isomorphism is of course equivalent to
saying that the $\Pic^0$-torsor $\Pic_{\alpha}^d$ is trivial, \New{as it admits the section provided by the restriction of $F$}.

This proves the result for $\alpha\in \NS(S)\otimes\QQ/\ZZ$, but only
for one particular $d$. To obtain all $d+i$, one has to work with
non-trivial classes $\alpha_0\in A(S\New{,h})$. To be precise,
fix a line bundle $L_0$ on $S$ with $(L_0.h)\ZZ=(\NS(S).h)$ and let
$\New{\alpha_0=(i/(L_0.h))\cdot L_0  \in A(S,h)}$. Pick a locally free sheaf $F_0$ with $\rk(F_0)=(L_0.h)$
and  $\det(F_0)=L_0^i$. Then for $\alpha_0\coloneqq[\kend(F_0)]$ one has $\Pic_{\alpha_0}^0\congpf\Pic_\alpha^{d+i}$ via $E\mapsto E\otimes F_0$. 
\New{Note that in this case, the torsor $\Pic_\alpha^{d+i}$ is not necessarily trivial anymore.}
\smallskip

For the general case, write any $\alpha\in \SBr(S)$ as a product
$\alpha=\alpha_0\cdot\alpha_1$ with $\alpha_0\in \SBro(S\New{,h})$ and $\alpha_1\in \NS(S)\otimes \QQ/\ZZ$. Once the decomposition is picked, we
first deal with $\Pic^d_\alpha(\kc/|h|_{\rm sm})$ for one choice of $d$ and then show how to modify the decomposition $\alpha=\alpha_0\cdot\alpha_1$ to $\alpha=(\alpha_0\cdot\gamma^{-1})\cdot(\gamma\cdot \alpha_1)$ by some
$\gamma\in A(S\New{,h})$ to obtain all $d+i$, cf.\ Remark \ref{rem:twistint}. 

For the fixed choice of $\alpha_0=[\ka_0]$ and $\alpha_1=[\kend(F)]$, one has
$$\Pic^0_{\alpha_0}\congpf\Pic_\alpha^d, ~E\mapsto E\otimes F$$ with
$d=\mu(F|_C)$. Now let $\gamma\in A(S\New{,h})$ be the class $(i/(L_0.h))\cdot L_0$ and choose $F_0$ as above. Then $\Pic_{\alpha_0\cdot\gamma^{-1}}^0\congpf
\Pic_\alpha^{d+i}$ via $E\mapsto E\otimes (F\otimes F_0)$ and $\alpha_0\cdot\gamma^{-1}$.
\end{proof}

\begin{remark}\label{rem:reiter}
\New{We reiterate, see beginning \S\! \ref{sec:mainproof}, that for $h$ primitive $\Sha(S,h)$ can also be defined as the quotient of the smaller
subgroup $\SBro(S)\subset\SBr(S,h)$ by
the kernel of the restriction $\zeta_h\colon A(S)\to \ZZ/(\NS(S).h)$. In particular, as can also be seen by going through the above proof, the $\alpha_0$ in the proposition can be chosen to be contained in $\SBro(S)$.

Note that for $h=k\cdot h_0$, one can view $\Sha(S, h_0)$ as a subgroup
of $\Sha(S,h)$ with a cyclic quotient of order $k$. This inclusion maps
$\Pic^1(\kc/|h_0|_{\rm sm})$ to  $\Pic^k(\kc/|h|_{\rm sm})$.}
\end{remark}

\begin{ex}\label{ex:genexa}
In particular, all the untwisted relative Picard varieties $\Pic^d(\kc/|h|_{\rm sm})$
are still accounted for, namely 
$$\Pic^d(\kc/|h|_{\rm sm})\cong\Pic_{\New{-}\bar d}^0(\kc/|h|_{\rm sm}).$$
{Here,  $\bar d\in \ZZ/(\NS(S).h)$} is defined as  the image
of $(d/(L_0.h))L_0\in A(S\New{,h})\subset \NS(S)\otimes\QQ/\ZZ$,
where $L_0\in\NS(S)$ satisfies $(L_0.h)\ZZ=(\NS(S).h)$. Hence,
 $\Pic^0_{\bar d}(\kc/|h|_{\rm sm})$ is a moduli space
in $\Coh(S,\kend(F))$ with $F$  any locally free sheaf of rank $(L_0.h)$ and determinant {$L_0^{-d}$.}
To conclude,  use that {$\deg(F|_C\otimes L)=-(L_0^{d}.C)+d \cdot \rk(F)=0$}
which leads to an isomorphism
 $$\xymatrix@R=12pt{\Pic^d(\kc/|h|_{\rm sm})\,\ar[r]^\sim& \Pic^0_{\New{-}\bar d}(\kc/|h|_{\rm sm})},~L\mapsto F|_C\otimes {L}.$$ \end{ex}


\New{
The discussion so far says that
\begin{equation}\label{eqn:Inj}
\Sha(S,h)\to\left\{\, \Pic^0(\kc/|h|_{\rm sm})\text{-torsors}\,\right\}/_\cong,~
\alpha\mapsto \Pic_\alpha^0(\kc/|h|_{\rm sm})
\end{equation}
is a group homomorphism whose image consists of all torsors of the form $\Pic_\alpha^d(\kc/|h|_{\rm sm})$. 
\smallskip

Ideally, one would like to prove the injectivity of (\ref{eqn:Inj}).\footnote{The
fact that we prove in the proposition below the injectivity only for classes of sufficiently high order 
is the reason for the `possibly non effectively' in Theorem \ref{thm:1}.}
For this, one would need to show that if for a class $\alpha\in \SBr(S,h)$ the torsor $\Pic_\alpha^0(\kc/|h|_{\rm sm})$ is trivial, then
$\alpha$ is contained in the subgroup
$\ker(\xi_h)=\ker(\zeta_h)$. Let us first explain the two problems  one 
has to overcome  and then explain in Proposition \ref{prop:inj} how to
deal with them under the assumption that the order of 
$\alpha\in\SBr(S,h)$ is sufficiently large.
\smallskip

(i) Assume $\alpha\in \SBro(S)$ such that $\Pic_\alpha^0(\kc/|h|_{\rm sm})$ is a trivial torsor. Then the pull-back $\bar\alpha|_\kc\in \Br(\kc)$ of $\bar\alpha\in \Br(S)$ to the smooth part $\pi\colon\kc\to|h|_{\rm sm}$ is contained in the kernel of the natural map
$\Br(\kc)\to H^1(|h|_{\rm sm},R^1\pi_\ast\GG_m)$. This kernel is a quotient
of $\Br(|h|_{\rm sm})$ which might be non-trivial. If $\bar\alpha|_\kc\in \Br(\kc)$ could be shown to be trivial, then the injectivity of $\Br(S)\,\hookrightarrow \Br(\kc)$ would prove that also $\bar\alpha\in \Br(S)$ is trivial.
\smallskip

(ii) Assuming that (i) has been carried out, then we know already that $\alpha$ is contained in $A(S,h)=\ker\left(\SBr(S,h)\to\Br(S)\right)$.
As we are only interested in the image of $\alpha$ in the group $\ZZ/(\NS(S).h)\subset
\Sha(S,h)$, we may assume that $\alpha$ is of the form $(d/(L_0.h)) L_0$,
where $L_0\in\NS(S)$ with $(L_0.h)\ZZ=(\NS(S).h)$. Then, $\Pic^0_\alpha(\kc/|h|_{\rm sm})$ is isomorphic to the untwisted torsor
$\Pic^d(\kc/|h|_{\rm sm})$, see Example \ref{ex:genexa} below for details.
Since we are assuming that $\Pic^0_\alpha(\kc/|h|_{\rm sm})$ is trivial,
both torsors, $\Pic^0_\alpha(\kc/|h|_{\rm sm})$ and, hence also, $\Pic^d(\kc/|h|_{\rm sm})$, admit sections. 

Suppose now that the section of $\Pic^d(\kc/|h|_{\rm sm})$ corresponds to
a line bundle $\kl$ on $\kc\to|h|_{\rm sm}$ (this is a priori obstructed
by a class in $\Br(|h|_{\rm sm})$.)
Then extending $\kl$ to the complete family $\bar\kc\to|h|$ and using that $\bar\kc\to S$ is a projective bundle,
we may write $\kl$ as a pull-back of a line bundle $L$ on $S$ up to twists
by the tautological bundle on $|h|$. But then $(L.h)=d$ is a multiple of $(L_0.h)$
and, therefore, the class of $\alpha$ in $\ZZ/(L_0.h)\ZZ$ is trivial.
\smallskip 

Summarising, in both steps, (i) and (ii), there is potentially an obstruction
in $\Br(|h|_{\rm sm})$ to carry out the program and prove the injectivity 
of (\ref{eqn:Inj}). Note that for elliptic K3 surfaces, i.e.\ $|h|$ is of dimension one, the Brauer group
 $\Br(|h|_{\rm sm})$ is trivial and hence (\ref{eqn:Inj}) injective, see 
 Proposition \ref{prop:TSEll}. But we expect that both obstructions vanish also
 for higher-dimensional linear systems, for which one would need to extend
 them to $|h|$ and then use that $\Br(|h|)$ is trivial.
 
\smallskip

\begin{prop}\label{prop:inj}
Every element in the kernel of  (\ref{eqn:Inj}) divides $e^2$, where
$e$ is the positive generator of $(\NS(S).h)$.
\end{prop}

\begin{proof} Assume $\alpha\in\SBr(S,h)$ is an element in the kernel of (\ref{eqn:Inj}). The first part of the following argument works for any $\alpha\in \SBr(S,h)$

The relative moduli space $\Pic_\alpha^0\coloneqq \Pic_\alpha^0(\kc/|h|_{\rm sm})$ over $P=|h|_{\rm sm}$ is not necessarily fine. So, a universal bundle
$\kp\to \Pic^0_\alpha\times_P\kc$ only exists as a $(\beta,\bar\alpha)$-twisted
sheaf for some $\beta\in \Br(\Pic_\alpha^0)$. Note that the order of $\beta$
divides $e$. Indeed, any divisor $D$ on $S$ with $(D.h)=e$ defines
a relative cycle $\tilde P$ of degree $e$ of $\kc\to P$ on which every Brauer
class on $\kc$ coming from $S$ vanishes. This applies to $\alpha$
and implies that the base change $\Pic_\alpha^0\times_P\tilde P$ is
a fine moduli space and hence $|\beta|$ divides $e$. 

On the other hand, we assume that $\Pic_\alpha^0\to P$ is a trivial torsor
and, therefore, comes with a section $P\subset \Pic_\alpha^0$. The restriction
of $\kp$ to $P\times_P\kc$ is then locally free of rank one twisted with respect to
$(\beta|_P,\alpha)=\pi^\ast(\beta|_P)\cdot\bar\alpha$. In other words,
$\bar\alpha=\pi^\ast(\beta|_P)^{-1}$ and, therefore, the order of $\bar\alpha\in \Br(S)$ divides $e$. Hence, the order of $\alpha\in \SBr(S)$ divides $e^2$.
\end{proof}

Note that in the final part of the proof, proving injectivity under the assumption on the order
of $\alpha$, we only had to address (i). The hypothesis of Proposition
\ref{prop:inj} can be slightly
weakened by also addressing (ii) while using that the untwisted
$\Pic^d$ (as an open subset of the moduli space
$M(0,h,d+(1-g))$ on $S$) is a fine moduli space  if $d+(1-g)$ and $2g-2$ 
coprime
(or, slightly weaker, if $d+(1-g)$ and $e$ are coprime). We leave this to the reader.}\medskip

\noindent{\it Proof of Theorem \ref{thm:1}.}
We conclude the proof of Theorem \ref{thm:1} by combining Proposition \ref{prop:comptwo} and Remarks \ref{rem:ShahBr} \& \ref{rem:groupstructure}.\qed

\subsection{Analytic Tate--{\v{S}}afarevi{\v{c}} group}\label{sec:Markman}
Before continuing our discussion we make a digression on the analytic version of our construction. \New{We introduce the analytic Brauer  of a complex projective K3 surface $S$ }and compare it to a construction of Markman \cite{Mark},
{cf.\ \cite{AbRo}}. This will not be used in the rest of the paper. \New{We will restrict to the case that $h$ is primitive, which allows us to
view $\Sha(S,h)$ as a quotient of $\SBro(S)$, cf. \S\! \ref{sec:mainproof} and Remark
\ref{rem:reiter}}.\smallskip

In our context, it seems natural to introduce the \emph{analytic special Brauer group} as
$$\SBro(S)^{\rm an}\coloneqq H^2(S,\ko_S)/T(S).$$ 
It comes with a surjection onto the analytic Brauer group $$\SBro(S)^{\rm an}= H^2(S,\ko_S)/T(S)\twoheadrightarrow \Br(S)^{\rm an}\coloneqq H^2(S,\ko_S^\ast)$$ and 
naturally contains the (algebraic) special Brauer group as the subgroup of all
torsion elements $\SBro(S)\subset\SBro(S)^{\rm an}$. We have the natural commutative diagram
$$\xymatrix{0\ar[r]&A(S)\ar[r]\ar@{=}[d]&\SBro(S)\cong T(S)\otimes\QQ/\ZZ\ar[r] 
\ar@{^(->}[d]&\Br(S)\cong T(S)^\ast\otimes\QQ/\ZZ\ar[r]\ar@{^(->}[d]&0\\
0\ar[r]&A(S)\ar[r]&\SBro(S)^{\rm an}\cong H^2(S,\ko_S)/T(S)\ar[r]&\Br(S)^{\rm an}=H^2(S,\ko_S^\ast)\ar[r]&0.}$$

Also, analogously to (\ref{eqn:SBroSha}), one can define the analytic version $\Sha(S,h)^{\rm an}$ of $\Sha(S,h)$
as a quotient $\SBro(S)^{\rm an}\twoheadrightarrow  \Sha(S,h)^{\rm an}$. Then
$\Sha(S,h)\subset\Sha(S,h)^{\rm an}$ is the torsion subgroup. The corresponding commutative diagram is
$$\xymatrix{0\ar[r]&\ZZ/(\NS(S).h)\ar[r]\ar@{=}[d]&\Sha(S,h)\ar[r] 
\ar@{^(->}[d]&\Br(S)\ar[r]\ar@{^(->}[d]&0\\
0\ar[r]&\ZZ/(\NS(S).h)\ar[r]&\Sha(S,h)^{\rm an}\ar[r]&\Br(S)^{\rm an}\ar[r]&0.}$$

It is possible to introduce $\Pic_\alpha^0(\kc/|h|_{\rm sm})$ for any class in the
analytic special Brauer 
$\alpha\in \SBro(S)^{\rm an}$ or $\alpha\in \Sha(S,h)^{\rm an}$. However, if $\alpha$ is not torsion, then it will be non-algebraic and, in particular, it is more difficult to talk about its
generic fibre. In other words, only a bimeromorphic equivalence class 
of a complex manifold together with a holomorphic projection to $|h|$ will be defined.
\smallskip

 Markman introduces a group called $\Sha^0$, see \cite[(7.7)]{Mark}. He
identifies it with a certain finite quotient of $\SBro(S)^{\rm an}=H^2(S,\ko_S)/T(S)$
by enlarging the transcendental lattice $T(S)$ by all classes in $T(S)\otimes\QQ$ that can be completed  by elements in $\NS(S)\otimes\QQ$ to 
integral classes in $H^2(S,\ZZ)$ that are  orthogonal to all irreducible components
of all curves $C\in |h|$. In particular, if $S$ has Picard number one, then $\SBro(S)^{\rm an}=\Sha^0$, but in general $\Sha^0$ depends on the choice of $h$ and approximates
our analytic Tate--{\v{S}}afarevi{\v{c}} group $\Sha(S,h)^{\rm an}$. More precisely, the analytic version of the quotient (\ref{eqn:SBroSha}) factors through $\Sha^0$:
$$\SBro(S)^{\rm an}\twoheadrightarrow \Sha^0\twoheadrightarrow \Sha(S,h)^{\rm an}.$$
\New{For non-primitive complete linear system $h=k\cdot h_0$ the discussion
shows that the torsion group of Markman's $\Sha^0$ maps onto
the subgroup $\Sha(S,h_0)\subset\Sha(S,h)$ of index $k$.}


\subsection{Digression on B-fields}\label{sec:Bfield2}
We briefly come back to Remark \ref{rem:Bfield1}.  
Assume $\alpha\in \Br(S)\cong H^2(S,\ko_S^\ast)_{\rm tor}\cong T'(S)\otimes \QQ/\ZZ$ and
pick a B-field lift of $\alpha$, i.e.\ a class $B\in H^2(S,\QQ)$ such  that
its image under the exponential map $H^2(S,\QQ)\to H^2(S,\ko_S)\to H^2(S,\ko_S^\ast)$,  or, equivalently, under the projection $H^2(S,\QQ)\twoheadrightarrow T'(S)\otimes\QQ\twoheadrightarrow T'(S)\otimes\QQ/\ZZ$, gives back $\alpha$.

Using the decomposition $H^2(S,\QQ)=(\NS(S)\otimes\QQ)\oplus (T(S)\otimes\QQ)$, the class $B$ can also be projected onto a class in $T(S)\otimes \QQ$ and
then further onto a class $\tilde\alpha\in\SBro(S)\cong T(S)\otimes\QQ/\ZZ$,
which maps to $\alpha$ under $\SBro(S)\twoheadrightarrow \Br(S)$. In general, picking a B-field lift for
a class $\alpha\in \Br(S)$ is strictly more information than is actually needed. We will now explain why for most practical purposes, a lift to a class in $\SBro(S)$ suffices.\smallskip

(i) Via the exponential map, the class $\alpha\in\SBr(S)$ or rather the corresponding class in $H^2(S,\ZZ)\otimes\QQ/\ZZ$ maps to a class $\alpha^{0,2}\in H^{0,2}(S)=H^2(S,\ko_S)$. This allows one to associate with a generator $\sigma\in H^{2,0}(S)=H^0(S,\omega_S)$ the class $\sigma_\alpha\coloneqq\sigma+\sigma\wedge \alpha^{0,2}\in H^2(S,\CC)\oplus H^4(S,\CC)$, so that we can define a Hodge structure,$$\widetilde H(S,\alpha,\ZZ)$$ of K3 type associated
with $\alpha\in\SBro(S)$  as follows:
The underlying lattice is nothing but the extended Mukai lattice $\widetilde H(S,\ZZ)$ and its $(2,0)$-part is spanned by $\sigma_\alpha$. All other parts of the Hodge structure are then determined by the usual orthogonality requirements. See \cite{HS1,HSeattle}.

\smallskip

(ii)  Representing the  class $\alpha\in\SBr(S)$ in the special Brauer group by
an Azumaya algebra $\ka$, allows us to realise the abelian category $
\Coh(S,\alpha)$ as $\Coh(S,\ka)$. Then, for any $E\in \Coh(S,\alpha)$ we
consider the Mukai vector $v_\alpha(E)=\ch_\alpha(E)\cdot\sqrt{{\rm td}(S)}$,
well defined and independent of the choice of $\ka$, see Definition--Proposition \ref{defprop:Chern}. But the Mukai vector is of type $(1,1)$ with respect to the untwisted Hodge structure and in general not with respect to the twisted Hodge structure. However, things
are better in our situation, as indeed $$v_\alpha(E)\in\widetilde H^{1,1}(S,\alpha,\QQ)$$ for sheaves $E\in \Coh(S,\alpha)$ supported on curves in $S$.

\section{Elliptic and Lagrangian fibrations}
We specialise to the case of elliptic K3 surfaces and link our theory to the classical
Ogg--Tate--{\v{S}}afarevi{\v{c}} theory, see  \cite[Ch.\ 11]{HuyK3} for comments and references. {We wish to point out that moduli spaces of twisted
line bundles on fibred surfaces have been studied by Lieblich in his thesis
\cite[Ch.\ 5]{LiebPhD}.} \smallskip

For an elliptic K3 surface $S_0\to \PP^1$ with a section, the Tate--{\v{S}}afarevi{\v{c}} group $\Sha(S_0/\PP^1)$ parametrises
 elliptic K3 surfaces $S\to\PP^1$ together with an isomorphism $S_0\cong\overline\Pic^0(S/\PP^1)$ over $\PP^1$. The surface $S_0$ is thus a moduli space of stable sheaves on $S$. However, usually it is only a coarse moduli space, i.e.\ a universal sheaf on $S_0\times_{\PP^1}S$ exists only as a twisted sheaf with respect to a Brauer class $\beta\in \Br(S_0)$. Turning this around, shows that the twists $S$ parametrised
 by $\Sha(S_0/\PP^1)$ can be viewed as moduli spaces of twisted sheaves  
of rank one on the fibres of $S_0\to\PP^1$. One would like to write this last observation as $\Pic^0_\beta(S_0)\cong S$,
 although $\beta$ seems to occur naturally as a class in $\Br(S_0)$ and not
 as a class in  $\SBro(S_0)$. The reason why this is possible is that the natural
 map $\Sha(S_0,f)\twoheadrightarrow \Br(S_0)$ is in fact an isomorphism
 and $ \Sha(S_0,f)\cong \Sha(S_0/\PP^1)$. This shall be explained first.

\subsection{Comparison with the classical Tate--{\v{S}}afarevi{\v{c}} group}\label{sec:classTS}
We begin with an elliptic K3 surface $S\to \PP^1$ without a section. We
denote by $f$ the class of a fibre and let $m$ be such that
$m\ZZ=(\NS(S).f)$. Then $S_0\coloneqq\overline\Pic^0(S/\PP^1)=M(0,f,0)$ is the relative Jacobian of $S$, which is an elliptic K3 surface
$S_0\to\PP^1$ with a section.

Denoting by $f$ also the class of a fibre of $S_0\to\PP^1$,
the existence of a section implies $(\NS(S_0).f)=\ZZ$.
Then, by virtue of Remark \ref{exa:divone}, the natural projection
in (\ref{eqn:CDShah}) is an isomorphism
\begin{equation}\label{eqn:ShaBr1}
\Sha(S_0,f)\congpf\Br(S_0).
\end{equation}
On the other hand, viewing $\Pic_\beta^0(S_0/\PP^1)$ as a torsor for {$\Pic^0(S_0/\PP^1)$, which compactifies to $S_0$,} defines a group homomorphism
$\Sha(S_0,f)\to\Sha(S_0/\PP^1)$, which is in fact an isomorphism.

\begin{lem}\label{lem:ShaSha}
Mapping $\beta\in \Sha(S_0,f)$ to  $\Pic^0_\beta(S_0/\PP^1)$ viewed as
a torsor for $S_0$ defines an isomorphism of groups
\begin{equation}\label{eqn:ShaSha}
\Br(S_0)\cong\Sha(S_0,f)\congpf\Sha(S_0/\PP^1).
\end{equation}
\end{lem}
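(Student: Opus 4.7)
The plan is to verify that the map $\Phi\colon \Sha(S_0,f)\to\Sha(S_0/\PP^1)$, $\beta\mapsto \Pic^0_\beta(S_0/\PP^1)$, is a well-defined group homomorphism that is both injective and surjective.

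Well-definedness and the group-homomorphism property both follow from what has already been established. By Theorem \ref{thm:1} (see also Remark \ref{rem:twistint}), for any $\beta\in\Sha(S_0,f)$ the scheme $\Pic^0_\beta(S_0/\PP^1)\to|f|_\text{sm}$ is a torsor for $\Pic^0(S_0/\PP^1)\to|f|_\text{sm}$, and its natural compactification is an elliptic surface over $\PP^1$ whose Jacobian fibration is $S_0\to\PP^1$, hence a well-defined class in the classical Tate--{\v{S}}afarevi{\v{c}} group. That $\Phi$ respects the group structures is exactly the content of Remark \ref{rem:groupstructure}, since on both sides the operation is the fibrewise tensor product of $\Pic^0$-torsors.

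Injectivity is then immediate from Corollary \ref{cor:injec}: applying it with $\alpha_1=\beta$ and $\alpha_2=0$ shows that if $\Phi(\beta)$ is the trivial torsor then $\xi_f(\beta)=0$, so $\beta$ is trivial in $\Sha(S_0,f)$.

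I expect the main obstacle to be surjectivity, where one must attach a Brauer class to a given twist. Given an elliptic K3 surface $S\to\PP^1$ with Jacobian $S_0$, view $S_0$ as the coarse moduli space $M(0,f,0)$ on $S$; the obstruction to the existence of a universal family on $S\times_{\PP^1}S_0$ defines a Brauer class $\beta\in\Br(S_0)$, and the resulting universal $\beta$-twisted sheaf identifies $S$ with $\Pic^0_\beta(S_0/\PP^1)$ over $\PP^1$. Using the canonical isomorphism (\ref{eqn:ShaBr1}), this $\beta$ lifts uniquely to a class in $\Sha(S_0,f)$ whose image under $\Phi$ is the given twist. Alternatively, one may simply invoke the classical Artin--Tate identification $\Sha(S_0/\PP^1)\cong\Br(S_0)$ of \cite{BrauerIII}, which combined with (\ref{eqn:ShaBr1}) forces the injective homomorphism $\Phi$ to be bijective, provided one checks compatibility of the two identifications on a single non-trivial class.
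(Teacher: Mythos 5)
Your proposal is correct and follows essentially the same route as the paper: injectivity via Corollary \ref{cor:injec}, and surjectivity by realising a given twist $S$ as a moduli space of rank-one $\beta$-twisted sheaves on the fibres of $S_0\to\PP^1$, with $\beta$ the obstruction to a universal family on $S\times_{\PP^1}S_0$. The only detail you elide is that this construction a priori identifies $S$ with some $\Pic^d_\beta(S_0/\PP^1)$, and one uses the section of $S_0\to\PP^1$ to get $\Pic^d_\beta(S_0/\PP^1)\cong\Pic^0_\beta(S_0/\PP^1)$ for all $d$, which is exactly the remaining step in the paper's proof.
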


\begin{proof}
 Indeed, as recalled above, we know that every twist $S$ is
a moduli space of twisted sheaves of rank one on the fibres of $S_0\to\PP^1$.
Due to the existence of the section,  $\Pic_\beta^d(S_0/\PP^1)\cong\Pic^0_\beta(S_0/\PP^1)$ for all $d$, so the map
is surjective.

The injectivity is a consequence of \New{the arguments before Proposition \ref{prop:inj}.}
\end{proof}

Combining (\ref{eqn:ShaBr1}) and (\ref{eqn:ShaSha}), one recovers directly the well-known isomorphism
\begin{equation}\label{eqn:ShaBrnew}
\Sha(S_0/\PP^1)\cong\Br(S_0).
\end{equation}
However, at this point it is not evident that our isomorphism coincides with
classical one by Artin and Tate \cite[\S4]{BrauerII} that uses the \New{Leray} spectral sequence, cf.\  Remark \ref{rem:ShaBrnew}.

The classical Tate--{\v{S}}afarevi{\v{c}} group $\Sha(S_0/\PP^1)$ is only defined
for elliptic K3 surfaces with a section. But the description of it as
$\Sha(S_0,f)$ allows us to speak of the Tate--{\v{S}}afarevi{\v{c}} group without
assuming the existence of a section.

\begin{definition} Assume $S\to\PP^1$ is an elliptic K3 surface (with or without a section). If $f$ denotes the class of a fibre, then we call $\Sha(S,f)$ the \emph{Tate--{\v{S}}afarevi{\v{c}} group} of $S\to\PP^1$.
\end{definition}

\subsection{Tate--{\v{S}}afarevi{\v{c}} group without a section} As a next step we will explain the link between this new 
Tate--{\v{S}}afarevi{\v{c}} group $\Sha(S,f)$ of an elliptic K3 surface $S\to \PP^1$
and the  classical Tate--{\v{S}}afarevi{\v{c}} group $\Sha(S_0,f)\cong\Sha(S_0/\PP^1)$ 
of its Jacobian fibration.\smallskip

We begin by recalling the short exact sequence
\begin{equation}\label{eqn:ShaBr}\xymatrix{0\ar[r]&\langle\beta\rangle\ar[r]& \Br(S_0)\ar[r]&\Br(S)\ar[r]&0,}
\end{equation}
where $\beta\in \Br(S_0)\cong\Sha(S_0/\PP^1)\cong\Sha(S_0,f)$ is the class corresponding to $S\to \PP^1$.
The result goes back to Artin and Tate, cf.\ \cite[\S4]{BrauerII}. First consider the scheme-theoretic generic fibre $E$ of $S_0\to\PP^1$ which is a smooth curve
of genus one over $\CC(t)$. Its Jacobian $E_0\coloneqq\Pic^0(E)$ is the identity
component of $\Pic(E)$. More precisely, there exists a short exact sequence
$\xymatrix{0\ar[r]&E_0=\Pic^0(E)\ar[r]&\Pic(E)\ar[r]&\ZZ\ar[r]&0.}$ The relative version
of this is a short exact sequence
$$\xymatrix{0\ar[r]&\ks_0\ar[r]&R^1\pi_\ast\GG_m/_{\text{vert}}\ar[r]&\ZZ\ar[r]&0,}$$
cf.\ \cite[Rem.\ 11.5.9]{HuyK3}. Here, $\pi\colon S\to \PP^1$ is the projection and $\ks_0$ is the sheaf of \'etale (or analytic) local section of the Jacobian fibration $S_0=\Pic^0(S/\PP^1)\to\PP^1$. Taking cohomology leads to the exact sequence
\begin{equation}\label{eqn:ArtTate}
\xymatrix{H^0(\PP^1,R^1\pi_\ast\GG_m)\ar[r]&\ZZ\ar[r]&H^1(\PP^1,\ks_0)\ar[r]&
H^1(\PP^1,R^1\pi_\ast\GG_m)\ar[r]&0.}
\end{equation}
The  \New{Leray} spectral sequence induces isomorphisms
$\Pic(S)\cong H^0(\PP^1,R^1\pi_\ast\GG_m)$ and $\Br(S)\cong H^2(S,\GG_m)\cong H^1(\PP^1,R^1\pi_\ast\GG_m)$. Also, the first map is nothing
but $L\mapsto (L.f)$ and, therefore, its cokernel is
$\ZZ/(\NS(S).f)$. Applied to the Jacobian fibration $S_0\to\PP^1$ itself, (\ref{eqn:ArtTate}) 
induces an isomorphism $H^1(\PP^1,\ks_0)\cong \Br(S_0)$. In general, one obtains a short exact sequence
$$\xymatrix{0\ar[r]&\ZZ/(\NS(S).f)\ar[r]&\Br(S_0)\ar[r]&\Br(S)\ar[r]&0}$$
and the kernel can indeed be shown to be just the subgroup $\langle\beta\rangle$.
For example, the order $|\beta|$ of the subgroup $\langle\beta\rangle$ equals the minimal fibre degree of any line bundle on $S$, i.e.\ $|\beta| \ZZ=(\NS(S).f)$.

\begin{remark}\label{rem:BrTorH1}
(i) Over the generic point of $\PP^1$, the surjection $\Br(S_0)\twoheadrightarrow \Br(S)$ viewed as the map $H^1(\PP^1,\ks_0)\twoheadrightarrow H^1(\PP^1,R^1\pi_\ast\GG_m)$ corresponds to $$H^1(\CC(t),\bar E_0)\to H^1(\CC(t),\Pic(\bar E))$$ which maps a torsor $E'$ for $E_0$ to the torsor
$(\Pic(E)\times E')/E_0$ for $\Pic(E)$.\smallskip

(ii) The map $$\Br(S)\congpf H^1(\PP^1,R^1\pi_\ast\GG_m)\,\hookrightarrow H^1(\CC(t),\Pic(\bar E))$$ is geometrically realised by mapping a Brauer
class $\alpha\in \Br(S)$ to the $\Pic(E)$-torsor $\Pic_{\alpha|_E}(E)$, see Remarks \ref{rem:Brauertorsor} \& \ref{rem:Brauertorsor2},
and Proposition \ref{prop:BrTorsCurves}. {Recall that the Leray spectral sequence for the projection $S\to \PP^1$ restricts to the Hochschild--Serre spectral sequence on the generic fibre.}
\end{remark}

\begin{remark}
An alternative way of constructing (\ref{eqn:ShaBr}) uses Hodge theory.
Indeed, the universal $(\beta,1)$-twisted sheaf on $S_0\times_{\PP^1}S$
induces a Hodge isometry $\widetilde H(S,\ZZ)\congpf \widetilde H(S_0,\beta,\ZZ)$ which restricts to the isometry $T(S)\congpf T(S_0,\beta)=\ker(\beta\colon
T(S_0)\to\QQ/\ZZ)$, cf.\ Section \ref{sec:Bfield2}, \cite[Ch.\ 14.4.1]{HuyK3}
or \cite[\S4]{HS1}. Applying $\Hom(~,\QQ/\ZZ)$ and using $\Br(S)\cong \Hom(T(S),\QQ/\ZZ)$ leads to an exact sequence  $\xymatrix{0\ar[r]&\langle\beta\rangle\ar[r]&\Br(S_0)\ar[r]&\Br(S)\ar[r]&0.}$
This is indeed nothing but (\ref{eqn:ShaBr}), but as we will not use this fact here, 
we do not give a proof.
\end{remark}

 Comparing
(\ref{eqn:ShaBr}) with the bottom sequence  in 
(\ref{eqn:CDShah})  suggests the next result.

\begin{prop}\label{prop:TSEll}
 Mapping $\alpha\in\Sha(S,f)$ to $\Pic_\alpha^0(S/\PP^1)$,
viewed as a torsor for {$\Pic^0(S/\PP^1)\subset S_0$}, defines
an isomorphism $\Sha(S,f)\congpf\Sha(S_0/\PP^1)$ which can be completed
to a commutative diagram
$$\xymatrix{0\ar[r]&\ZZ/m\ZZ\ar[r]\ar[d]_\cong&\Sha(S,f)\ar[d]_\cong\ar[r]&\Br(S)\ar@{=}[d]\ar[r]&0\\
0\ar[r]&\langle\beta\rangle\ar[r]&\Sha(S_0/\PP^1)\ar[r]&\Br(S)\ar[r]&0.}
$$
In particular, the generator of the subgroup $\ZZ/m\ZZ\subset\Sha(S,f)$ is mapped to the class $\beta\in \Sha(S_0/\PP^1)$ corresponding to $S$.
\end{prop}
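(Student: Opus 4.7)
The plan is to define $\Phi\colon\Sha(S,f)\to\Sha(S_0/\PP^1)$ by sending $\alpha$ to the class of $\Pic^0_\alpha(S/\PP^1)$, regarded as a torsor for the relative Jacobian $\Pic^0(S/\PP^1)\subset S_0$. Well-definedness on $\Sha(S,f)$ (rather than just on $\SBro(S)$) is exactly Corollary~\ref{cor:injec}, and the fact that $\Phi$ is a group homomorphism is Remark~\ref{rem:groupstructure}. Since both rows of the target diagram are short exact with identical right-hand term $\Br(S)$, once both squares are shown to commute the five-lemma will produce the asserted isomorphism.

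The commutativity of the right square is the main obstacle. I would reduce it to a computation on the scheme-theoretic generic fibre $E$ of $S\to\PP^1$, using the injection $\Br(S)\hookrightarrow\Br(E)\cong H^1(\CC(t),\Pic(\bar E))$ already invoked in the proof of Corollary~\ref{cor:injec}. The generic fibre of $\Phi(\alpha)$ is the $E_0$-torsor $\Pic^0_{\alpha|_E}(E)$, and the edge map $H^1(\CC(t),E_0)\to H^1(\CC(t),\Pic(\bar E))$ described in Remark~\ref{rem:BrTorH1}(i) sends it to $(\Pic(E)\times\Pic^0_{\alpha|_E}(E))/\Pic^0(E)$. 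By Proposition~\ref{prop:BrSBrPictorsor} this $\Pic(E)$-torsor is canonically $\Pic_{\bar\alpha|_E}(E)$, and by Proposition~\ref{prop:BrTorsCurves} its class represents precisely $\bar\alpha|_E\in\Br(E)$. Since the latter is the image of $\bar\alpha\in\Br(S)$ under the injection, the right square commutes.

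For the left square, let $\alpha_0$ denote a generator of $\ZZ/m\cdot\ZZ\subset\Sha(S,f)$; by Example~\ref{ex:genexa} this generator is characterised by $\Pic^0_{\alpha_0}(S/\PP^1)\cong\Pic^1(S/\PP^1)$ over $\PP^1$. The relative Abel--Jacobi map $x\mapsto\ko(x)$ gives a canonical isomorphism $S\cong\Pic^1(S/\PP^1)$ of $S_0$-torsors over $\PP^1$, so $\Phi(\alpha_0)=[S]=\beta$. As both cyclic subgroups have order $m$ (recall $|\beta|\cdot\ZZ=(\NS(S).f)$), $\Phi$ identifies $\ZZ/m\cdot\ZZ$ with $\langle\beta\rangle$. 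Combined with the commutativity of the two squares, the five-lemma now yields the claimed isomorphism $\Sha(S,f)\cong\Sha(S_0/\PP^1)$.
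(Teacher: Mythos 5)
Your proposal is correct and follows essentially the same route as the paper: the commutativity of the right square via the generic fibre, Remark \ref{rem:BrTorH1} and Proposition \ref{prop:BrSBrPictorsor}, and the identification of the generator of $\ZZ/m\cdot\ZZ$ with $\beta$ via Example \ref{ex:genexa}, are exactly the paper's arguments. The only divergence is at the end: the paper gets injectivity from Corollary \ref{cor:injec} and surjectivity from the observation that both groups are divisible of the same rank, whereas you conclude with the short five lemma once both squares are known to commute --- a slightly cleaner finish that avoids any separate divisibility argument for $\Sha(S,f)$.
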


\begin{proof} \New{Clearly, $\Pic_\alpha^0(S/\PP^1)$ is torsor
for $\Pic^0(S/\PP^1)\cong \Pic^0(S_0/\PP^1)$ and, therefore, defines an element in the Weil--Ch\^atelet group $\text{WC}(S_0/\PP^1)$. But it is in fact contained in the smaller Tate--{\v{S}}afarevi{\v{c}} group 
$\Sha(S_0/\PP^1)\subset\text{WC}(S_0/\PP^1)$. Indeed,  for a generic polarisation the compactification $\Pic^0_\alpha(S/\PP^1)$ by the moduli
space of all $\alpha$-stable sheaves with the given Mukai vector is  a smooth K3 surface. Hence, the map $\Sha(S,f)\to \Sha(S_0/\PP^1)$ is well defined.}\smallskip

The injectivity of the map is a consequence \New{of the comments at the end of the proof of Proposition \ref{prop:inj}} and the surjectivity follows from both groups being divisible of the same rank.\smallskip

Next we show that $\bar 1\in \ZZ/m \ZZ$ is mapped to $\beta$. This is a consequence of the more general observation
made in Example \ref{ex:genexa}. In this particular case it says
that for the class $\alpha\in \SBr(S)$ represented by $\kend(F)$, where
$F$ is locally free of rank $m$ with its determinant satisfying
$(\det(F).f)=m$, there exists an isomorphism
{$\Pic^0_\alpha(S/\PP^1)\cong\Pic^1(S/\PP^1)\subset S$ .}\smallskip

To prove the commutativity of the diagram on the right, it suffices to control the generic fibre. According to Remark \ref{rem:BrTorH1}, the map $$\Sha(S,f)\to \Sha(S_0/\PP^1)\cong\Br(S_0)\to \Br(S)\,\hookrightarrow H^1(\CC(t),\Pic(\bar E))$$  is given by
$\alpha\mapsto \Pic^0_{\alpha|_E}(E)\mapsto (\Pic(E)\times \Pic^0_{\alpha|E}(E))/E_0$ and  the map
$$\SBro(S)\twoheadrightarrow \Br(S)\,\hookrightarrow H^1(\CC(t),\Pic(\bar E))$$ sends $\alpha\in \SBro(S)$ first to $\bar\alpha\in \Br(S)$ and then to $\Pic_{\bar\alpha|_E}(E)$, which by Proposition \ref{prop:BrSBrPictorsor}
is the torsor $(\Pic(E)\times \Pic^0_{\alpha|_E}(E))/E_0$.
\end{proof}

\begin{remark}
Observe that the surjectivity says that for every $\gamma\in \Sha(S_0/\PP^1)\cong\Br(S_0)$ there exists a class $\alpha\in \SBro(S)$ such that
$$\Pic^0_\gamma(S_0/\PP^1)\cong \Pic^0_\alpha(S/\PP^1).$$ It would be interesting to
find a geometric proof for this. One idea could be to use the two twisted
universal families on $S_0\times_{\PP^1} \Pic^0_\gamma(S_0/\PP^1)$ and on
$S\times_{\PP^1} S_0$ to produce a family on $S\times_{\PP^1}\Pic^0_\gamma(S_0/\PP^1)$ inducing the desired isomorphism
by universality. However, as the first family is twisted by $(\gamma\times 1)$ and the
second one by $(1\times\beta)$, they do not concatenate directly. One would first need to
transform e.g.\ the first one to a family twisted by some $(\beta^{-1}\times\delta)$.
\end{remark}

\begin{remark}\label{rem:ShaBrnew}
(i) Observe that the above arguments in particular show that the
isomorphism (\ref{eqn:ShaBrnew}) coincides with the classical one constructed
via the \New{Leray} spectral sequence.\smallskip

(ii) There is yet another way of linking $\Br(S_0)$ and $\Sha(S_0/\PP^1)$, cf.\
\New{\cite[\S 5.4]{CaldPhD} or}
\cite[Rem.\ 11.5.9]{HuyK3}. If $S$ is the twist associated with $\beta\in \Sha(S_0/\PP^1)$, then $S_0$ can be viewed as a moduli space of sheaves on $S$, namely $\Pic^0(S/\PP^1)\cong S_0$. However, $S_0$ is only a coarse moduli space and the obstruction  
 to the existence of a universal family is a class $\gamma_\beta\in \Br(S_0)$. This defines
 a map $\Sha(S_0/\PP^1)\to \Br(S_0)$, $\beta\mapsto \gamma_\beta$, which again is nothing but (\ref{eqn:ShaBrnew}).
\end{remark}

\subsection{Twisting Lagrangian fibrations}\label{sec:Mark}
We conclude by linking our discussion with a result by Markman \cite{Mark}. 

Among other things, he proves that 
every non-special hyperk\"ahler manifold of ${\rm K3}^{[n]}$-type $X$ together
with a Lagrangian fibration $X\to \PP^n$ is a certain `twist' $M_s$ of a Mukai
system $M=\overline\Pic^d(\kc/|h|)\to|h|$ associated with some K3 surface $S$ and a complete
linear system $\kc\to|h|$ on it. His twists are parametrised by elements
$s\in \SBro(S)^{\rm an}$ of the analytic special Brauer group $\SBro(S)^{\rm an}$ (or rather of $\Sha^0$ used in \cite{Mark}, see Section \ref{sec:Markman}), which contains
the special Brauer group $\SBro(S)\subset\SBro(S)^{\rm an}$ as its torsion group.

How do these twists compare to our twisted Picard varieties $\Pic_\alpha^0(\kc/|h|_{\rm sm})$, where $\alpha$ is an element in the smaller group
$\SBro(S)\subset\SBro(S)^{\rm an}$? A priori, our setting seems at the same time more special and more general for the following reasons.\smallskip

(i) Markman only considers relative Picard varieties $\Pic^d(\kc/|h|_{\rm sm})$ which
can be compactfied to hyperk\"ahler manifolds. In other words, only those
moduli spaces $\overline\Pic^d(\kc/|h|)\cong M(0,h,s)$, where $d=s+(1/2)(h.h)$, are considered for which the Mukai vector $(0,h,s)$ is primitive. As we are only concerned with \New{the smooth curves $\kc\to|h|_{\rm sm}$,} this restriction is irrelevant for us.\smallskip

(ii) On the other hand, Markman `twists' an arbitrary smooth $\overline\Pic^d(\kc/|h|)=M(0,h,s)\to|h|$, and not only those with a section as our $\Pic^0(\kc/|h|_{\rm sm})$,
to obtain the given Lagrangian fibration $X\to\PP^n$. This can be remedied
by applying Proposition \ref{prop:getall} and Example \ref{ex:genexa}.

\medskip
Theorem \ref{thm:main2}  reinterprets  \cite[Thm.\ 1.5]{Mark} in the projective setting. \medskip

\noindent{\it Proof of Theorem \ref{thm:main2}.} 
The first step consists of observing
that for $M=\overline\Pic^d(\kc/|h|)$ and $s\in \SBro(S)^{\rm an}$, Markman's twist
$M_s$ is algebraic if  and only if $s$ is contained in $\SBro(S)\subset\SBro(S)^{\rm an}$. For this observe  that $\alpha=s\in \SBro(S)^{\rm an}$ 
is torsion, or equivalently contained in $\SBro(S)$, if and only if its
image $\bar \alpha\in \Br(S)^{\rm an}$ is torsion, i.e.\ contained in $\Br(S)\subset\Br(S)^{\rm an}$. The result was proved in broader generality
by Abasheva and Rogov \cite[Thm.\ 5.19]{AbRo} and \cite[Thm.\ A]{Abash}.
\smallskip

Next, according to Example \ref{ex:genexa}, we know that $M$
is birational to the torsor $\Pic^d(\kc/|h|_{\rm sm})\cong\Pic^0_{\New{-}\bar d}(\kc/|h|_{\rm sm})$ and by virtue of Remark \ref{rem:groupstructure} we have  
\New{\begin{equation}\label{eqn:prod}
\left(\Pic_{-\bar d}^0(\kc/|h|_{\rm sm})\times \Pic^0_{\alpha}(\kc/|h|_{\rm sm})\right)/\Pic^0(\kc/|h|_{\rm sm})\cong\Pic^0_{-\bar d\cdot\alpha}(\kc/|h|_{\rm sm}).
\end{equation}}

To conclude one shows that for $\alpha=s\in \Sha(S,h)$ the twist $M_s$ is 
the left hand side of (\ref{eqn:prod}), which is a direct consequence of the discussion in \cite[\S 7.2]{Mark}. This is proved analogously to the two-dimensional case, cf.\ Remark \ref{rem:ShaBrnew}. Hence, $M_s$
is birational to $\Pic_{-\bar d\cdot\alpha}^0(\kc/|h|_{\rm sm})$.

As an alternative for the last step, one could first prove a version of 
(\ref{eqn:prod}) for the total Picard varieties: 
$\left(\Pic_{-\bar d}^0(\kc/|h|_{\rm sm})\times \Pic_{\alpha}(\kc/|h|_{\rm sm})\right)/\Pic^0(\kc/|h|_{\rm sm})\cong\Pic_{-\bar d\cdot\alpha}(\kc/|h|_{\rm sm})$
and compare the left hand side with $M_s$ via Remark \ref{rem:BrTorH1}, see also Proposition \ref{prop:BrSBrPictorsor}.
\qed

\New{In order to apply the results of Markman, we a priori have to assume in
Theorem \ref{thm:main2} that $X$  is `non-special'. This assumption has subsequently be removed by Abasheva \cite[Thm.\ A]{Abash} and Soldatenkov--Verbitsky \cite[Thm.\ 4.7]{SV}. 

Our assumption  $\rho(X)=2$ implies that $\rho(S)=1$
which ensures that Markman's results can be applied, cf.\ \cite[Ass.\ 7.1 \& Rem.\ 7.2]{Mark}.  While the condition $\rho(X)=2$ determines a complement of a countable union  of  hypersurfaces in the space of all Lagrangian fibred $X$, Markman's condition \cite[Ass.\ 7.1]{Mark} is  Zariski open. Hence, our Theorem \ref{thm:main2} also holds for a Zariski open, dense subset of the space of all 
Lagrangian fibrations $X\to\PP^n$ of $\text{K3}^{[n]}$-type.
 }

\end{document}